\newcommand{\transp}[1]{#1^T}
\newcommand{\diag}[1]{\operatorname{diag}(#1)}
\newcommand{\NN}{\mathds{N}}
\newcommand{\RR}{\mathds{R}}
\newcommand{\Id}{I}
\newcommand{\vect}[1]{\vec{#1}}
\newcommand{\gvect}[1]{\overset{\leftarrow}{#1}}
\newcommand{\spectre}{\operatorname{Sp}}
\newcommand{\ham}{\operatorname{Ham}}
\newcommand{\row}{\operatorname{row}}
\newcommand{\cm}{R}
\newcommand{\kernels}{\mathbb{K}}
\newcommand{\cubics}{\kernels^{(3)}}
\newcommand{\PATH}{P} % chemins d'arbres contraints
\newcommand{\cms}{(\cm, \sigma)}
\newcommand{\ccm}{S}
\newcommand{\cgcm}{c}
\newcommand{\cc}{\operatorname{cc}}%{sharp{\textrm{cc}(#1)}}
\newcommand{\bin}[1]{[#1]_2}
\newcommand{\psat}{\mathds{P}_{\operatorname{SAT}}}
\newcommand{\probabip}{\mathds{P}_{\operatorname{Bip}}}
\newcommand{\smat}[1]{\left( \begin{smallmatrix} #1 \end{smallmatrix} \right)}
\newtheorem{theorem}{Theorem}
\newtheorem{lemma}[theorem]{Lemma}
\begin{document}

\title[Analytic Combinatorics of Inhomogeneous Multigraphs]{Analytic Description of the Phase Transition of Inhomogeneous Multigraphs}

\author{\'Elie de Panafieu}
\address{Research Institute for Symbolic Computation (RISC) \\
Johannes Kepler Universit\"at \\
Altenbergerstra\ss{}e 69 \\
A-4040 Linz, Austria}
\thanks{This work was partially founded by the ANR Boole, the ANR Magnum
and the Austrian Science Fund (FWF) grant F5004.\\ 
This is the long version of the homonym paper accepted in the proceedings of Eurocomb 2013.}

\author{Vlady Ravelomanana}
\address{Univ Paris Diderot, Sorbonne Paris Cit\'e,\\
  LIAFA, UMR 7089,\\
   75013, Paris, France}

\begin{abstract}

We introduce a new model of random multigraphs 
with colored vertices and weighted edges.
It is
similar to the \emph{inhomogeneous random graph}
model of S\"oderberg~\cite{S02}, extended 
by Bollob\'as, Janson and Riordan~\cite{BJR07}.
By means of analytic combinatorics,
we then analyze the birth of \emph{complex components},
which are components with at least two cycles.

We apply those results
to give a complete picture of the finite size scaling 
and the critical exponents associated 
to a rather broad family of decision problems.
As applications, we derive new proofs
of known results on the $2$-colorability problem~\cite{PY10}
and on the enumeration of properly $q$-colored multigraphs~\cite{W72}.
We also obtain new results on the phase transition of the satisfiability 
of quantified~$2$-Xor-formulas~\cite{DR11,CDE07}.

\textbf{Keywords.}
generating functions, analytic combinatorics, inhomogeneous graphs, phase transition
\end{abstract}

\maketitle

    \section{Introduction}

Phase transitions for Boolean Satisfiability (SAT) 
and for Constraint Satisfaction Problems (CSP)
are fundamental problems arising 
in different communities including 
computer science, mathematics and physics.
For any $k\geq 2$, the random version 
of the well-known $k$-SAT problem is known to exhibit 
a sharp phase transition~\cite{F99}: 
as the density $c$ of clauses
(where the number of clauses is $c$ times the number of variables) 
increases, the formula abruptly changes from being satisfiable to being unsatisfiable at a critical threshold point. 
For general CSP, the last decade has seen 
a growth of interest in determining the nature of the SAT/UNSAT phase transition (sharp or coarse), locating it, 
determining a precise scaling window 
and better understanding the structure of the space of solutions.
These turn out to be very challenging tasks 
(see e.g. \cite{DMSZ}, \cite{OM06}).

    \subsection{Related Works}

Recently, different authors gave precise descriptions of the phase transitions 
associated to several tractable decision problems.

The $2$-colorability problem consists in determining
if the vertices of a given graph can be colored with two colors 
in such a way that the vertices of each edge 
of the graphs have distinct colors.
Pittel and Yeum~\cite{PY10} derived the limit probability
for a random graph $G(n,m)$ with $n$ vertices and $m$ edges
to be $2$-colorable, when the parameter $m/n$ is smaller than 
or in the vicinity of~$1/2$.

Almost at the same time, 
Daud\'e and Ravelomanana~\cite{DR11} 
considered the $2$-Xor satisfiability problem
in which each instance is a formula 
that is a conjunction of Boolean equations of the 
form~$x \oplus y=0$ or~$x \oplus y= 1$.  
They showed that the probability that a random $2$-Xor formula 
is satisfiable tends to a similar threshold function.

In~\cite{CDE07}, Creignou~\textit{et al}
studied a quantified version of $2$-Xor-SAT.
They introduced $(a,2)$-QXor formula, which are
formula of the form~$\forall X \exists Y \phi(X, Y)$
where~$X$ and~$Y$ denote distinct set of variables 
and~$\phi$ is a Xor-formula with clauses containing 
exactly~$a$ variables from~$X$ and~$2$ variables from $Y$.
The problem consists in determining
if for \textit{every} assignment of the variables $X$, 
there \textit{exists} an assignment of the variables $Y$
such that $\phi(X,Y)$ is true. 
For any positive integers $a$, 
the authors showed that the phase transitions 
of~$(a, 2)$-QXor-SAT are coarse 
and that the probability of satisfiability is almost~$0$
when the number of variables is
around~$2$ times the number of clauses.

A random graph from the~$G(n, p)$ model
has~$n$ vertices and each pair of vertices
is linked with probability~$p$.
In~\cite{ER60}, Erd\H{o}s and R\'enyi
located the density of edges at which the first
connected component with more than one cycle
- called a \emph{complex} component - appears.
Using analytic tools, Janson, Knuth, \L{}uczak and Pittel
derived in~\cite{JKLP93} more information
on the structure of a random graph or multigraph near
the birth of complex components.
S\"oderberg introduced in~\cite{S02} a model
of inhomogeneous random graphs\footnote{We thank Annika Heckel for bringing to our notice the existence of this model.},
extended by Bollob\'as, Janson and Riordan~\cite{BJR07}.
This model generalizes~$G(n,p)$ in the following way:
each vertex receives a type among a set of~$q$ types,
and the probability that a vertex
of type~$i$ and one of type~$j$
are linked is the coefficient~$(i,j)$
of a symmetric matrix~$\cm$ of dimension~$q \times q$.
Among other results, they located the birth
of the complex component.
We combine here the accuracy of the approach of~\cite{JKLP93}
with the generality of the inhomogeneous random graph model.
We also enrich the model, adding a weight~$\sigma$ 
for each connected component.

    \subsection{Our Work}

Random Boolean formulas with two variables per clause 
can be modeled by random multigraphs.
Observe that the critical density~$1/2$,
common in~\cite{PY10,DR11} and~\cite{CDE07},
corresponds to an important change 
in the structure of the underlying graphs:
as the number of edges reaches half the number of vertices, 
components more complex than trees or unicyclics
start to appear in random graphs
(see for instance \cite{Bollobas,JKLP93}).
Our main goal is to establish 
a general framework that allows precise descriptions
of some of the phase transitions 
of random formulas with~$2$ variables per clause.
Namely our results apply to (and generalize) 
those in~\cite{PY10,DR11}. 
In particular, they give a more detailed picture 
of the transitions introduced in~\cite{CDE07} 
for quantified formulas.
To do so, we study a new model of colored and weighted random multigraphs,
similar to the inhomogeneous random graph model~\cite{S02} 
and to the multigraph process~\cite{JKLP93}.
We then propose a detailed analysis (thought still very general) 
with the purpose of quantifying 
the probability of satisfiability of formulas 
before and inside the critical window of their phase transitions.
Our work is based on generating functions and analytic methods. 

The rest of the paper is organized as follows: 
in Section~\ref{sec:model_theorem}, we first present our model
and derive the main theorem on the asymptotic number of inhomogeneous multigraphs
before and near the birth of complex components.
Then, in Section~\ref{sec:applications},
we show how to use this theorem to derive the phase transition
of several tractable satisfiability problems,
namely bipartitness, quantified $2$-Xor-formulas
and random~$k$-colorings.
Section~\ref{sec:proof} is dedicated to the proof of the main theorem:
first we derive the generating functions
of the vertex-colored and edge-weighted trees,
unicyclic graphs, paths of trees 
and graphs with cubic kernel of fixed excess.
They are then gathered to build general multigraphs.

    \section{Model and Main Theorem} \label{sec:model_theorem}

We consider labelled multigraphs
- loops and multiple edges are allowed -
with colored vertices and weighted edges.
Let~$\cm$ be a symmetric~$q \times q$ matrix
with non-negative coefficients
and~$\sigma$ a fixed positive constant.
Let~$\{c_1, \ldots, c_q\}$ be a set
of~$q$ distinct colors. A multigraph~$G$ is a
\emph{$\cms$-multigraph} if
\begin{itemize}
  \item each vertex~$v$ of~$G$ is colored with color $c(v) \in \{c_1, \ldots, c_q\}$,
  \item each edge $\overline{vw}$ of $G$ is weighted with~$\cm_{c(v), c(w)}$,
  \item a weight $\sigma$ is given to each connected component.
\end{itemize}
Following~\cite{B80, BF85, FKP89, JKLP93}, 
the \emph{compensation factor}~$\kappa(G)$ of a multigraph~$G$
with set of vertices $V$ and set of edges $E$ is
\begin{equation} \label{eq:COMPENSATION_FACTOR}
  \kappa(G) := \prod_{v \in V} 2^{-m_{v,v}} \prod_{\overline{vw} \in E} \left( m_{v,w}! \right)^{-1}
\end{equation}
where $m_{v,w}$ is the number of edges binding~$u$ to~$v$ in~$G$.
Let us consider an ordered sequence of $m$ ordered couples of vertices
$(v_1, w_1), \ldots, (v_m, w_m)$. Interpreting each couple as an edge,
this sequence describes a multigraph.
The number of such sequences corresponding to a given multigraph $G$
with $m$ edges is exactly $2^m m! \kappa(G)$.
Therefore, the two following random processes
induce the same probability distribution on the multigraphs with $n$ vertices and $m$ edges:
\begin{itemize}
\item
draw among all multigraphs with $n$ vertices and $m$ edges
with probability proportional to the compensation factors,
\item
draw uniformly and independently $2 m$ vertices in $[1,n]$ to form a sequence of couples
$(v_1, w_1), \ldots, (v_m, w_m)$, output the corresponding multigraph.
\end{itemize}
The compensation factor is equal to~$1$
if and only if the multigraph contains neither loops nor multiple edges
(such a multigraph is called \emph{simple}).
The sum of the compensation factors of all multigraphs
with~$n$ vertices and~$m$ edges is called, for simplicity, their \emph{number}
and is equal to~$\frac{n^{2m}}{2^m m!}$ (which needs not be an integer).
Given an $\cms$-multigraph $G$, we define its \emph{weight}~$\omega(G)$ 
as the product of three terms: 
the compensation factor~$\kappa(G)$,
a factor $\sigma$ for each connected component
and the product of the weights of its edges
\begin{equation} \label{def:WEIGHT-G}
  \omega(G) = \kappa(G) \times \sigma^{\cc(G)} 
  \times \prod_{\overline{vw} \in E} \cm_{c(v), c(w)} , 
\end{equation}
where $\cc(G)$ is the number of connected components of~$G$
and $E$ its set of edges.

There are three differences between our model and the original one,
introduced by S\"oderberg~\cite{S02}.
First, the number of edges is a parameter of the model,
while in~\cite{S02} each pair of vertices
is linked by an edge with some probability.
This is the same difference as between 
the classic graph models $G(n,m)$ and $G(n,p)$.
Secondly, we consider multigraphs instead of simple graphs.
Thirdly, the parameter $\sigma$ is new.
It brings to the model the expressiveness needed
to encode the constraint satisfaction problems
considered in Section~\ref{sec:applications}.

An edge-weighted multigraph is \emph{vertex-transitive}
if its automorphism group is transitive and 
also preserve the weights
-- see for instance Godsil and Royle~\cite{GR01}.
Intuitively, this means that, using only the topology of the multigraph,
no vertex can be distinguished from another.
Let~$G$ be a multigraph with~$q$ vertices and
weighted edges.
The \textit{weighted adjacency matrix} $\cm$ of~$G$ is
a~$q \times q$ matrix with entry~$\cm_{i,j}$
equal to the sum of the weights
of the edges between vertex~$i$ and vertex~$j$.
For simplicity, we say that a matrix~$\cm$
is \emph{vertex-transitive}
if it is symmetric, has non-negative coefficients 
and the weighted multigraph
associated is vertex-transitive.
The special structure of those matrices
implies several properties, in particular of their spectrum,
which are listed in Lemma~\ref{th:cm}.
%
%All the matrices $R$ considered in Section~\ref{sec:applications}
%are vertex-transitive.
%
Many models using~$\cms$-multigraphs
involve vertex-transitive matrices~$\cm$, 
e.g. the~$2$-colorability 
and the quantified~$2$-Xor satisfiability problems,
as will be shown in Section~\ref{sec:applications}.
Since our aim is to emphasize the link between
the birth of complex components
and the phase transition of some satisfiability or constraint problems,
we focus on the case where~$\cm$ is vertex-transitive.

%In order to investigate the asymptotic \emph{number} of $\cms$-multigraphs, 
We define $g_{\cm, \sigma}(n, m)$ 
as the sum of the weights of the $\cms$-multigraphs
built with $n$ vertices and $m$ edges
\[
  g_{\cm, \sigma}(n, m) = \sum_{|G| = n,\ \|G\| = m} \omega(G). %\sum_{\substack{|G|=n\\ \|G\| = m}} \omega(G).
\]

\begin{theorem}\label{th:main_vt}
  Let $\cm$ be a~$q \times q$ vertex-transitive matrix 
  with greatest eigenvalue~$\delta$,
  $\sigma$ a positive fixed constant,
  $c$ the number of connected components 
  in the multigraph associated to $\cm$
  and let $\chi(X)$ denote the polynomial~$\prod_{\lambda \in \spectre(\cm) \setminus \delta} 
	\left( 1- \frac{\lambda}{\delta } X \right)$, where $\spectre(\cm)$ is the spectrum of $\cm$.
  For any~$m/n$ in a closed interval of~$]0, 1/2[$,
\[
  g_{\cm, \sigma}(n, m) \sim
    \frac{n^{2m}}{2^m m!}
    \left(1-\frac{2m}{n}\right)^{\frac{1-c \sigma}{2}}
    \frac{\delta^m (\sigma q)^{n-m}}
      {\chi(\frac{2m}{n})^{\sigma/2}}.
\]
  As $n$ is large and $m={\frac{n}{2}}(1+\mu n^{-1/3})$ with~$|\mu| \leq n^{1/12}$, 
\[
  g_{\cm, \sigma}(n, m) \sim
  \frac{n^{2m} }{2^m m!}  
  \phi_{c \sigma}(\mu)
  n^{(c \sigma - 1)/6}  
  \frac{\delta^m (\sigma q)^{n-m}}
  { \chi(1)^{\sigma/2} } 
\]
  where~$\phi_{\sigma}(\mu)$ is equal 
  to~$\sqrt{2\pi} \sum_k e_k^{(\sigma)} \sigma^k A(3k+\frac{\sigma}{2}, \mu)$,
  $e_k^{(\sigma)}$ is the~$(2k)$-th coefficient
  of~$\left( \sum_{n \geq 0} \frac{(6n)! z^{2n}}{(2n)! (3n)! 2^n (3!)^n} \right)^\sigma$
  and~$A(y,\mu) = \frac{e^{-\mu^3/6}}{3^{(y+1)/3}}
    \sum_{k \geq 0} \frac{(3^{2/3} \mu /2)^k}{k! \Gamma((y+1-2k)/3)}$.
\end{theorem}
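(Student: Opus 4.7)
The plan is to follow the analytic-combinatorial template of Janson--Knuth--\L{}uczak--Pittel~\cite{JKLP93}, adapted to the colored and edge-weighted setting. I would decompose a $\cms$-multigraph as a $\sigma$-weighted set of connected components and stratify the components by excess $\ell$ into trees ($\ell=-1$), unicyclics ($\ell=0$) and complex components ($\ell\geq 1$). The bivariate exponential generating function of $\cms$-multigraphs then factorises as $\exp\!\bigl(\sigma(T(z,w)+U(z,w)+W(z,w))\bigr)$, where $z$ marks vertices, $w$ marks edges, and $T$, $U$, $W$ are the EGFs of trees, unicyclics and complex components respectively.

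For trees, the system of rooted colored tree EGFs $T_i(z,w)=z\exp(w\sum_j \cm_{i,j}T_j)$ admits, by the vertex-transitivity of $\cm$, a symmetric solution in which every $T_i$ coincides, up to rescaling, with the classical tree function $\tau$ of argument $w\delta z$; the dominant eigenvalue $\delta$ and the color count $q$ are what eventually deliver the factor $(\sigma q)^{n-m}\delta^m$. For unicyclic components, the EGF is a trace of cycles of rooted trees; diagonalising $\cm$ splits this trace into a dominant piece $\tfrac12\log(1-\tau)^{-1}$ coming from the simple eigenvalue $\delta$ and an analytic remainder $-\tfrac12\log\chi(\tau)$ collecting the other eigenvalues, which is precisely where the polynomial $\chi$ of the statement enters. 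For each excess $k\geq 1$, complex components are rebuilt from cubic kernels on $2k$ vertices by subdividing the $3k$ edges with paths of rooted trees, so their EGF is a polynomial in $1/(1-\tau)$ with weighted-kernel coefficients; raising the component EGF to the $\sigma$-th power then produces the coefficients $e_k^{(\sigma)}$ through an exponential-formula expansion.

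Coefficients are then extracted by Cauchy's formula. In the subcritical range, the saddle stays strictly inside the disk of analyticity, and a standard Laplace expansion yields the first asymptotic; the exponent $(1-c\sigma)/2$ of $(1-2m/n)$ reflects the $c$-fold presence of the unicyclic logarithmic singularity (one per connected block of the weighted graph of $\cm$) combined with the Gaussian saddle correction. In the critical window $m=\tfrac{n}{2}(1+\mu n^{-1/3})$, the saddle approaches $\tau=1$; the rescaling $\tau=1-xn^{-1/3}$ along a Hankel-type contour converts the Cauchy integral, term by term in the excess $k$, into a generalised Airy-type integral whose value is precisely $A(3k+\sigma/2,\mu)$, and summation over $k$ recovers the series $\phi_{c\sigma}(\mu)$.

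The main obstacle is the critical-window analysis: one must (i) exchange the sum over excesses $k$ with the Cauchy integral, which demands uniform control of the complex-component EGF near $\tau=1$; (ii) cover the full range $|\mu|\leq n^{1/12}$ with uniform error bounds, which forces splitting the contour into a local Airy piece and tails that must be shown negligible on both ends of the window; and (iii) verify that the non-dominant eigenvalues of $\cm$ contribute only through the benign factor $\chi(1)^{\sigma/2}$ at criticality, which relies on vertex-transitivity (Lemma~\ref{th:cm}) to guarantee that $\delta$ strictly dominates the other eigenvalues in modulus.
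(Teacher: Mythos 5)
Your proposal follows essentially the same route as the paper: the JKLP-style stratification by excess, the diagonalization of $\cm$ to split the unicyclic generating function into the dominant $-\tfrac{1}{2}\log(1-T)$ term plus $-\tfrac{1}{2}\log\chi(T)$, the cubic-kernel dominance for excess $k$, large-powers asymptotics in the subcritical range, and Lemma~3 of~\cite{JKLP93} (the Airy-type integral $A(y,\mu)$) in the critical window. The only place you are noticeably lighter than the paper is the subcritical regime, where the first asymptotic concerns the full $g_{\cm,\sigma}(n,m)$ and one must still prove that $\sum_{k\geq 1} g^{(k)} = o(g^{(0)})$ — the paper does this by a comparison with uncolored multigraphs combined with a rescaling of $\cm$ — but your excess-stratified framework contains the ingredients to supply this.
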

\noindent \textbf{Remarks.} 
\begin{enumerate}
\item
As a corollary of Lemma~\ref{th:cm}, 
the number~$\cgcm$ of connected components in the graph with adjacency matrix~$\cm$
is equal to the multiplicity of the greatest eigenvalue~$\delta$ of this vertex-transitive matrix.
\item \label{item:chi_and_P}
The polynomial
\[
  \chi(X) = \prod_{\lambda \in \spectre(\cm) \setminus \delta} 
  \left( 1- \frac{\lambda}{\delta } X \right)
\] 
is linked to the characteristic polynomial~$P_\cm(X) = \det(X \Id - \cm)$ by the relation
\[
  \chi(X) = \left(\frac{X}{\delta}\right)^q \frac{P_\cm(\frac{\delta}{X})}{(1-X)^c}.
\]
\item
Since $\cm$ is a symmetric matrix with dominant eigenvalue $\delta$,
all the values in $\spectre(\cm) \setminus \delta$ are real and smaller that $\delta$.
Therefore, $\chi(1)$ is positive.
\end{enumerate}

    \section{Applications} \label{sec:applications}

To describe the phase transition of a problem,
we perform the following steps:
%we build $\cm$ and $\sigma$ in order to obtain
%a one-to-one mapping between the instances of the problem and the $\cms$-multigraphs,
%we derive from~$\cm$ the values~$q$, $\cgcm$, $\delta$, $\chi(\frac{2m}{n})$ and $\chi(1)$ 
%defined in Theorem~\ref{th:main_vt}
%and, finally, apply this theorem. %Theorem~\ref{th:main_vt}.
\begin{enumerate}
 \item build $\cm$ and $\sigma$ in order to obtain
  a one-to-one mapping between the instances of the problem and the $\cms$-multigraphs,
  \item derive from~$\cm$ the values~$q$, $\cgcm$, $\delta$, $\chi(\frac{2m}{n})$ and $\chi(1)$ 
  defined in Theorem~\ref{th:main_vt},
  \item apply Theorem~\ref{th:main_vt}.
\end{enumerate}
In the following section, we rediscover some results 
from Pittel and Yeum~\cite{PY10},
prove new results on the satisfiability 
of quantified~$2$-Xor-formulas~\cite{CDE07}
and rederive the probability that a random~$k$-coloring is proper~\cite{W72}.

    \subsection{Bipartite Multigraphs}

A \emph{proper} 2-coloring of a multigraph
is a way of coloring the vertices with~$2$ colors
such that no two adjacent vertices share the same one.
%Given a graph $G$, % with $n$~vertices and $m$~edges, 
%the $2$-colorability problem is a classical (tractable) problem in
%computer science: decide if there is a proper 2-coloring for~$G$.
A graph is \emph{bipartite} if it admits a proper $2$-coloring.
In particular, such a graph contains no loop.
In~\cite{PY10}, the authors computed
the probability for a random graph with~$n$ vertices and~$m$ edges to be bipartite,
and we propose a new proof of some of their results.
%Here, to illustrate our theorem, 
%we offer a new proof of some of their results.

Let~$G$ be a multigraph with~$n$ vertices
and~$c$ a function from~$[1,n]$ to~$\{1,2\}$.
We define the vertex-colored and edge-weighted
multigraph~$G_c$ as follows:
a color~$c(v)$ is assigned to each vertex~$v$,
each edge~$\overline{vw}$ has weight~$1$
if~$c(v) \neq c(w)$ and~$0$ if~$c(v) = c(w)$.
The weight~$\omega(G_c)$ of~$G_c$ is~$\kappa(G)$ 
times the product of the weights of the edges.
Therefore, $\omega(G_c) = 0$ if and only if
there exist adjacent vertices~$v$ and~$w$
such that~$c(v) = c(w)$.
It follows that
$\sum_{c:[1,n] \to \{1,2\}} \omega(G_c)$
is the number of ways to properly $2$-color~$G$.
We just described a one-to-one mapping between
the properly $2$-colored multigraphs
and the $\left(\left(\begin{smallmatrix}0&1\\1&0\end{smallmatrix}\right),1\right)$-multigraphs.

Every bipartite multigraph~$G$ admits~$2^{\cc(G)}$ proper $2$-colorings,
because such a coloring is characterized
by the choice of one color in each connected component. 
Therefore, to count each bipartite multigraph exactly one time, 
each connected component receives a compensation factor~$\frac{1}{2}$. 
This proves that the bipartite multigraphs are in a one-to-one mapping 
with the~$\left(\left(\begin{smallmatrix}0&1\\1&0\end{smallmatrix}\right),\frac{1}{2}\right)$-multigraphs.

For $\cm = \left(\begin{smallmatrix}0&1\\1&0\end{smallmatrix}\right)$,
we have $\delta = 1$, $\chi(X) = 1+X$, $q = 2$, $\cgcm = 1$ and~$\sigma = \frac{1}{2}$.
As a corollary of Theorem~\ref{th:main_vt}, we thus have

\begin{theorem}
  Let $\probabip(n, m)$ denote the probability that
  a random graph or multigraph
  with~$n$ vertices and~$m$ edges is bipartite, then
  \begin{itemize}
  \item when~$m/n$ is in a closed interval of~$]0,1/2[$,
  \[
  \probabip(n, m) \sim \left( \frac{1 - \frac{2m}{n}}{1+\frac{2m}{n}} \right)^{1/4},
   %+ \bigO \left(n^{-1/2+\epsilon'}\right) ,
  \]
  \item for any~$|\mu| \leq n^{1/12}$, 
  \[
   \lim_{n \rightarrow \infty} n^{1/12}\ \probabip\Bigl (n,\frac{n}{2}(1+\mu n^{-1/3})\Bigr ) = \phi_{1/2}(\mu),
  \]
  \end{itemize}
  where~$\phi_{1/2}(\mu)$, defined in Theorem~\ref{th:main_vt},
  decreases from~$1$ to~$0$ for~$\mu$ in~$\RR$.
%  from~$-\infty$ to~$+\infty$.    
\end{theorem}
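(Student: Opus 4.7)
The plan is to apply Theorem~\ref{th:main_vt} directly to the one-to-one correspondence established in the paragraphs preceding the statement: bipartite multigraphs are identified with $\cms$-multigraphs for $\cm = \smat{0 & 1 \\ 1 & 0}$ and $\sigma = \tfrac{1}{2}$, with parameters $\delta = 1$, $\chi(X) = 1+X$, $q = 2$, and $\cgcm = 1$. All the work is then bookkeeping.

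For the subcritical regime, where $m/n$ lies in a closed subinterval of $]0, 1/2[$, I substitute these values into the first formula of Theorem~\ref{th:main_vt}. The factors simplify as $\delta^m = 1$, $(\sigma q)^{n-m} = 1$, $(1-\cgcm\sigma)/2 = 1/4$, and $\chi(2m/n)^{\sigma/2} = (1+2m/n)^{1/4}$, giving
\[
  g_{\cm, 1/2}(n,m) \sim \frac{n^{2m}}{2^m m!}\left(\frac{1 - \frac{2m}{n}}{1 + \frac{2m}{n}}\right)^{1/4}.
\]
Since $\probabip(n,m)$ is obtained by dividing $g_{\cm,1/2}(n,m)$ by the total weight $n^{2m}/(2^m m!)$ of all multigraphs on $n$ vertices and $m$ edges, the first claim follows immediately.

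For the critical window I apply the second formula of Theorem~\ref{th:main_vt}. Again $\delta^m = 1$ and $(\sigma q)^{n-m} = 1$, and the exponent $(\cgcm\sigma - 1)/6$ specialises to $-1/12$. Multiplying by $n^{1/12}$ and dividing by the total weight $n^{2m}/(2^m m!)$ yields
\[
  n^{1/12}\, \probabip\Bigl(n, \tfrac{n}{2}(1+\mu n^{-1/3})\Bigr) \longrightarrow \phi_{1/2}(\mu),
\]
where the constant $\chi(1)^{-\sigma/2} = 2^{-1/4}$ is absorbed into the definition of $\phi_{1/2}$.

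The only part of the statement that does not reduce to a mechanical substitution is the monotonic decrease of $\phi_{1/2}(\mu)$ from $1$ to $0$ on $\RR$, which I anticipate being the main obstacle. I would handle this by matching boundary behaviours: as $\mu \to -\infty$, the critical-window asymptotic must glue onto the subcritical formula (whose ratio $((1-2m/n)/(1+2m/n))^{1/4}$ tends to $1$ when $m/n$ is still well below $1/2$ but the rescaled parameter $\mu$ tends to $-\infty$), which pins $\phi_{1/2}(-\infty) = 1$; as $\mu \to +\infty$, a direct argument or a comparison with the typical size of the complex component above the threshold forces $\phi_{1/2}(+\infty) = 0$. The monotonicity in between should be extracted from the explicit series $\phi_{1/2}(\mu) = \sqrt{2\pi}\sum_k e_k^{(1/2)} (1/2)^k A(3k+1/4, \mu)$, either via an integral representation of $A(y,\mu)$ or by identifying $\phi_{1/2}$ with a probability generating function evaluated at a decreasing quantity.
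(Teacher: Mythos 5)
Your proposal matches the paper's own derivation exactly: the paper treats this as an immediate corollary of Theorem~\ref{th:main_vt} via the bijection between bipartite multigraphs and $\left(\smat{0&1\\1&0},\frac{1}{2}\right)$-multigraphs, the substitution $\delta=1$, $\chi(X)=1+X$, $q=2$, $c=1$, $\sigma=\frac{1}{2}$, and division by the total weight $\frac{n^{2m}}{2^m m!}$. You even correctly flag the factor $\chi(1)^{-\sigma/2}=2^{-1/4}$ that the stated limit silently drops, and your sketch of the monotonicity of $\phi_{1/2}$ (via monotonicity of $\probabip$ in $m$ and matching with the subcritical regime) addresses a point the paper merely asserts, deferring the analytic behaviour of $A(y,\mu)$ to the cited literature.
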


%\noindent \textbf{Remark.} Using the same tools, 
%this result can be extended to simple graphs.

    \subsection{Quantified $2$-Xor Formulas}

In~\cite{CDE07}, the authors analyze
quantified~$2$-Xor formulas.
Those are quantified conjunctions of $m$~Xor-clauses 
with $\beta$~universal and $n$~existential variables
\begin{equation} \label{eq:2QXorSAT}
 \forall x_1\hdots x_{\beta} \exists y_1 \hdots y_n 
  \bigwedge_{i=1}^m \left( 
    y_{f_{i,1}} \oplus y_{f_{i,2}} 
    = (e_{i,1} \wedge x_1) 
    \oplus \cdots
    \oplus (e_{i, \beta} \wedge x_\beta)
  \right).
\end{equation}
The values of the variables $(x_i)$ and $(y_j)$ 
can be considered equally as Booleans or bits, 
by identifying \emph{True} to $1$ and \emph{False} to $0$.
The Boolean operator \emph{Xor} $x \oplus y$ corresponds to
the bit sum $(x + y \mod 2)$, 
and the \emph{And} Boolean operator $x \wedge y$ 
to the product $(x y \mod 2)$.
The authors study how the probability
of satisfiability evolves
with the number~$m$ of clauses when
the number~$n$ of existential variables is large,
and locate the value of~$m$ at which
the phase transition occurs.

Each clause
\[
    y_{f_{i,1}} \oplus y_{f_{i,2}} 
    = (e_{i,1} \wedge x_1) 
    \oplus \cdots
    \oplus (e_{i, \beta} \wedge x_\beta)
\]
is characterized by a triplet $(f_{i,1}, f_{i,2}, e_i)$
where $f_{i,1}$ and $f_{i,2}$ are integers in $[1,n]$
and $e_i$ is a $\beta$-tuple of bits.
We consider clauses such that 
$e_i$ belongs to a fixed multiset~$E$
of~$\beta$-tuples of bits.
We call the formulas that contain only
those clauses the \emph{$E$-formulas}.
For example, $y_1 \oplus y_2 = x_1 \oplus x_3$
is a $\{(1,0,1), (1,1,0)\}$-formula (with only one clause),
but $y_1 \oplus y_2 = x_2 \oplus x_3$ is not.
For any integer $j$ in $[1, 2^\beta]$, %$0 \leq j \leq 2^{\beta}$,
$\bin{j}$ is the $\beta$-tuple of bits 
matching the binary decomposition of~$j-1$
\[
  \bin{j} = (b_0, \ldots, b_{\beta-1})
  \text{ \ if and only if \ }
  j-1 = \sum_{k=0}^{\beta-1} b_k 2^k.
\]
To a multiset~$E$ of~$\beta$-tuples of bits, 
we associate a matrix~$\cm^{(E)}$ 
of dimension~$2^{\beta} \times 2^{\beta}$ 
such that~$\cm^{(E)}_{i,j}$ is the number of occurrences
of~$\bin{i} \oplus \bin{j}$ in~$E$:
\[
 \cm^{(E)}_{i,j} = \#\{ e \in E\ |\ \bin{i} \oplus \bin{j} = e\}.
\]
For example, when~$\beta = 2$ and~$E = \{\smat{0&1}, \smat{1&0}\}$, 
we have~$R^{(E)} = \smat{0&1&1&0\\ 1&0&0&1 \\ 1&0&0&1 \\ 0&1&1&0}$.

\begin{lemma} \label{th:bij2qxor}
  Let~$E$ be a multiset of $\beta$-tuples of bits.
  There exists a one-to-one mapping between
  \begin{itemize}
  \item the satisfiable $E$-formulas with~$n$ existential variables
    and~$m$ clauses,
  \item the $(R^{(E)},2^{-\beta})$-multigraphs with $n$~vertices and $m$~edges.
  \end{itemize}
%  the satisfiable $E$-formulas with~$n$ existential variables and~$m$ clauses,
%  and the $(\cm^{(E)},2^{-\beta})$-multigraphs with $n$~vertices and $m$~edges.
\end{lemma}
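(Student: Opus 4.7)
The plan is to set up a weighted bijection in which the color of each existential variable $y_v$, viewed as an element of $\{0,1\}^\beta$, records the linear dependence of $y_v$ on the universal variables $X$, and the factor $2^{-\beta}$ per connected component absorbs the ambiguity of this encoding. Writing each $E$-formula as an ordered list of $m$ triples $(f_{i,1}, f_{i,2}, e_i) \in [1,n]^2 \times E$, the identity to verify is that the number of satisfiable such formulas equals $2^m m!\cdot g_{\cm^{(E)},\, 2^{-\beta}}(n, m)$, which matches the ordered-couple normalization of $\cms$-multigraphs established right after Equation~(\ref{eq:COMPENSATION_FACTOR}).

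The first step is to reduce universal satisfiability to a purely combinatorial cycle condition. Attach to each formula the multigraph $G$ on $\{y_1, \dots, y_n\}$ in which clause $i$ yields an edge labelled by $e_i \in E$. For fixed $X$ the formula becomes a $2$-Xor system in the $y_j$'s, which is satisfiable if and only if $\bigoplus_{i \in C} (e_i \cdot X) = 0$ for every cycle $C$ of $G$. Because $e_i \cdot X$ is linear in $X$, requiring this simultaneously for all $X \in \{0,1\}^\beta$ is equivalent to the combinatorial condition $\bigoplus_{i \in C} e_i = 0$ in $\{0,1\}^\beta$ for every cycle $C$ of $G$, including self-loops and pairs of parallel edges.

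The second step is to match the coloring structures on both sides. Starting from a satisfiable formula, a spanning forest of $G$ together with an arbitrary root value $a_r \in \{0,1\}^\beta$ in each component defines $c(v) := a_{r(v)} \oplus \bigoplus_{i \in \mathrm{path}(r(v),v)} e_i$; the cycle condition makes $c$ independent of the chosen path, so $c(v) \oplus c(w) = e$ for every edge $(v, w, e)$ of $G$ (tree or not). The set of valid colorings is therefore an affine space of dimension $\beta\cc(G)$, so each satisfiable formula carries exactly $2^{\beta\cc(G)}$ colorings. Conversely, given a colored multigraph $(G, c)$, a compatible formula is obtained by choosing, independently at each edge $\overline{vw}$, an element $e \in E$ with $e = \bin{c(v)} \oplus \bin{c(w)}$; by definition of $\cm^{(E)}$ the number of such choices at $\overline{vw}$ is $\cm^{(E)}_{c(v), c(w)}$, so $\sum_c \prod_{\overline{vw}} \cm^{(E)}_{c(v), c(w)}$ counts each satisfiable formula on $G$ with multiplicity $2^{\beta\cc(G)}$. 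Multiplying by $\sigma^{\cc(G)} = 2^{-\beta\cc(G)}$ inside $\omega(G)$ cancels this overcount, and summing $\omega$ weighted by $\kappa(G)$ over multigraphs yields exactly $2^m m!$ times the number of satisfiable $E$-formulas.

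The main technical point is the reduction of universal $2$-Xor satisfiability to the cycle-sum condition on the labels, which blends the classical consistency criterion for a $2$-Xor system over $\{0,1\}$ with the linearity of the right-hand sides in $X$. Once this is settled, everything else is direct bookkeeping: verifying that $\kappa(G)$, the entries of $\cm^{(E)}$, and the component weight $2^{-\beta}$ assemble into the announced one-to-one correspondence, consistently with the ordered-sequence presentation of $\cms$-multigraphs.
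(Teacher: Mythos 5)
Your proof is correct and follows essentially the same route as the paper's: both arguments count (formula, coloring) pairs, note that each edge of a colored multigraph admits exactly $\cm^{(E)}_{c(v),c(w)}$ compatible clause labels, and use the per-component weight $2^{-\beta}$ to cancel the $2^{\beta \cc(G)}$-fold multiplicity of valid colorings of a satisfiable formula. You are in fact slightly more careful than the paper on one point: your cycle-sum criterion explicitly justifies why $\forall\exists$-satisfiability is equivalent to the existence of at least one consistent coloring (and why every choice of root colors extends to one), a step the paper only sketches via its traversal argument.
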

\begin{proof}
  Let $\phi$ denote the formula of Equation~\eqref{eq:2QXorSAT} 
  and assume it is an $E$-formula.
  A \emph{solution} of $\phi$ is a set $\eta_1, \ldots, \eta_n$ of $n$ $\beta$-tuples of bits
  such that for each instantiation of the variables~$x_1, \ldots, x_\beta$,
  the values
  \begin{align*}
    y_1 &= \eta_{1,1} x_1 \oplus \cdots \oplus \eta_{1,\beta} x_\beta, \\
    &\qquad \vdots \\
    y_n &= \eta_{n,1} x_1 \oplus \cdots \oplus \eta_{n,\beta} x_\beta
  \end{align*}
%  $
%    (y_i = \eta_{i,1} x_1 \oplus \cdots \oplus \eta_{i,\beta} x_\beta)_{1 \leq i \leq n}%,\quad \ldots \quad
%    y_n = \eta_{n,1} x_1 \oplus \cdots \oplus \eta_{n,\beta} x_\beta
%  $ 
  satisfy~$\phi$.
  For example, the formula~$\forall x_1, x_2\ \exists y_1, y_2,\ y_1 \oplus y_2 = x_1$
  has~$4$ solutions 
  $\{\smat{1&0}, \smat{0&0}\}$,
    $\{\smat{1&1}, \smat{0&1}\}$,
    $\{\smat{0&0}, \smat{1&0}\}$,
  and~$\{\smat{0&1}, \smat{1&1}\}$.
  The first one matches the obvious solution~$y_1 = x_1$, $y_2 = 0$.

  We first build a bijection
  between the couples ($E$-formula, solution) 
  and the $(\cm^{(E)},1)$-multigraphs.
  Each existential variable~$y_i$
  matches a vertex of color $\eta_i$,
  and each clause an edge.
  The number of couples ($E$-formula, solution) is
  \[
    \sum_{\phi \in \text{$E$-formula}} \ \sum_{\text{solution of~$\phi$}} 1.
  \]
%  The principle of the proof is to switch the sums,
  The proof consists in switching the sums,
  assigning to each~$y_i$ a linear combination
  \[
    y_i = \eta_{i,1} x_1 \oplus \cdots \oplus \eta_{i,\beta} x_\beta,
  \]
  and to count the number of~$E$-formulas satisfied by those~$(y_i)$.
  By definition of~$\cm^{(E)}$, this is equal to~$g_{\cm^{(E)}, 1}(n,m)$.
%  the sum of the weights of the~$(\cm^{(E)},1)$-multigraphs.
  %
%  As a by-product of this bijection,
%  a formula is satisfiable if and only it admits a solution,
%  i.e. if and only if the corresponding~$(R,1)$-multigraph
%  has a non-zero weight.

  To end the proof, we show that
  a satisfiable~$E$-formula admits~$2^{\beta \cc}$
  solutions, where~$\cc$ is the number of connected
  components in its graph representation.
  Indeed, once the color (i.e. the~$\beta$-tuple)
  of an existential variable is chosen,
  a transversal of the graph determines
  the colors of the other existential variables of the component.
  So we have exactly one choice of color for each connected component,
  and~$2^\beta$ choices for this color.
\end{proof}

This application is an opportunity
to present some tools for deriving
the parameters $c$ and~$\chi(X)$
of Theorem~\ref{th:main_vt}
for non-trivial matrices~$\cm$.
Let~$\ham(\beta)$ denote the $2^{\beta}\times 2^\beta$ matrix
defined by
\[
 \ham(\beta)_{i,j} = \begin{cases}
  1 & \text{if the Hamming distance between~$\bin{i}$ and~$\bin{j}$ is~$1$,}\\
  0 & \text{otherwise.}
  \end{cases}
\]
%For example, 
%$\ham(2) = \left(\begin{smallmatrix} 0&1&1&0\\1&0&0&1\\1&0&0&1\\0&1&1&0\end{smallmatrix}\right)$.

    \subparagraph*{Xor-Clauses with one Universal Variable.}

If the Xor-clauses contain exactly one universal variable, like
$ y_i \oplus y_j = x_k$,
then~$E$ is the set~$\{e_1, \ldots, e_{\beta}\}$, 
where, for all~$i$, $e_i$ denotes the $\beta$-tuple of bits with a~$1$ at position~$i$ and~$0$ elsewhere,
and~$R^{(E)} = \ham(\beta)$. In this case, the number of colors is~$q = \sigma^{-1} = 2^{\beta}$.
The matrix~$\ham(\beta)$ is irreducible, so~$c = 1$.
Its greatest eigenvalue $\delta = \beta$ corresponds to the eigenvector~$\vect{1} = \transp{\smat{1 & \cdots & 1}}$.
The matrix~$\ham(\beta)$ admits the following block decomposition\footnote{We thank Timo Jolivet who helped us find this recursion.}:
\[
 \ham(\beta+1) = \left(\begin{smallmatrix} \ham(\beta) & \Id\\ \Id & \ham(\beta) \end{smallmatrix}\right),
\]
so its characteristic polynomial is solution of the following recursive formula:
\[ 
  P_{\ham(\beta+1)}(X) = \det((X \Id - \ham(\beta))^2 - \Id^2) 
  = P_{\ham(\beta)}(X-1) P_{\ham(\beta)}(X+1).
\]
By induction, 
\[
 P_{\ham(\beta)}(X) = \prod_{i=0}^\beta (X - (\beta - 2i))^{\binom{\beta}{i}}
\]
and, using Remark~\ref{item:chi_and_P},
\[
  \chi(X) = \prod_{i=1}^\beta \left(1 - X \left(1-\frac{2i}{\beta}\right)\right)^{\binom{\beta}{i}}.
\]

    \subparagraph*{Xor-Clauses with $\alpha$ Universal Variables.}

We consider Xor-clauses that contain 
the ordered sum of exactly~$\alpha$ universal variables,
e.g. for~$\alpha = 3$, $y_1 \oplus y_2 = x_1 \oplus x_3 \oplus x_1$.
In this case, $R^{(E)} = \ham(\beta)^\alpha$.
The parameters of this matrix are derived from those of~$\ham(\beta)$.
The size $q = 2^\beta$ is the same.
The eigenvalues of~$\ham(\beta)^\alpha$ are those of~$\ham(\beta)$, raised at the power~$\alpha$.
In particular, the greatest eigenvalue of~$\ham(\beta)^\alpha$ is~$\delta = \beta^\alpha$.
If~$\alpha$ is odd, $\ham(\beta)^\alpha$ is irreducible and~$c=1$.
If~$\alpha$ is even, the greatest eigenvalue of~$\ham(\beta)^\alpha$ has multiplicity~$2$,
so~$c = 2$.
Finally, 
\[
  \chi(X) = \prod_{i=1}^{\beta+1-c} 
  \left(1 - X \left(1-\frac{2i}{\beta}\right)^\alpha \right)^{\binom{\beta}{i}}.
\]

    \subparagraph*{Xor-Clauses with Distinct Universal Variables.}

If, furthermore, the~$\alpha$ universal variables in each Xor-clause
are constrained to be distinct, then 
$E$ is the set $\{ e \in \{0,1\}^\beta\ |\ e_1 + \cdots + e_\beta = \alpha \}$.
Let $\ham(\alpha,\beta)$ denote the matrix~$R^{(E)}$.
We claim that this matrix satisfies the recursive relation for all~$0 \leq \alpha \leq \beta -2$
\[
  \ham(\beta)\ham(\alpha+1,\beta) = (\beta-\alpha)\ham(\alpha,\beta)+(\alpha+2)\ham(\alpha+2,\beta).
\]
Indeed, the coefficient~$(i,j)$ of the matrix~$\ham(\beta)\ham(\alpha+1,\beta)$
is the number of ways to write the bit-to-bit sum
\begin{equation} \label{eq:qxorsat_distinct}
  \bin{i} \oplus \bin{j} = e \oplus v_{\alpha+1}
\end{equation}
where $e$ and $v_{\alpha+1}$ are $\beta$-tuples of bits
that contains respectively exactly~$\alpha+1$ and $1$ ones.
There are now three cases.
\begin{itemize}
\item
  If~$\bin{i} \oplus \bin{j}$ contains~$\alpha$ ones,
  then~$e$ has canceled a bit from~$v_{\alpha+1}$.
  There are then~$(\beta-\alpha)$ couples~$(v_{\alpha+1}, i)$
  which are solutions of Equation~\eqref{eq:qxorsat_distinct}.
\item
  If~$\bin{i} \oplus \bin{j}$ contains~$\alpha+2$ ones,
  then the one in~$e$ is added to a zero of~$v_{\alpha+1}$,
  and Equation~\eqref{eq:qxorsat_distinct}
  admits~$\alpha+2$ solutions.
\item
%  If the number of ones in~$\bin{i} \oplus \bin{j}$
%  is neither~$\alpha$ nor~$\alpha+2$,
%  then 
  Otherwise, Equation~\eqref{eq:qxorsat_distinct}
  has no solution.
\end{itemize}
Thus we can write
\[
  R^{(E)} = P_{\alpha,\beta}(\ham(\beta))
\]
where the polynomial~$P_{\alpha,\beta}$ 
is characterized by the recursive formula $P_{0,\beta}(X) = 1$,
$P_{1,\beta}(X) = X$ and for all $\alpha$ in $[0,\beta-2]$, then
\[
 (\alpha + 2) P_{\alpha+2,\beta}(X) = X P_{\alpha+1,\beta}(X) - (\beta - \alpha) P_{\alpha,\beta}(X).
\]
Again, the parameters of~$R^{(E)}$ are derived from those of~$\ham(\beta)$.
Observe that~$P_{\alpha,\beta}(\beta) = \binom{\beta}{\alpha}$ 
and~$P_{\alpha,\beta}(-\beta) = (-1)^\alpha \binom{\beta}{\alpha}$.
The size is $q = 2^\beta$, and
the greatest eigenvalue is $\delta = P_{\alpha,\beta}(\beta) = \binom{\beta}{\alpha}$.
If~$\alpha$ is odd, $\ham(\beta)^\alpha$ is irreducible and~$c=1$,
else~$c = 2$.
Finally
\[
 \chi(X) = \prod_{i=1}^{\beta+1-c} 
\left(1 - X \frac{P_{\alpha,\beta}\left(\beta-2i\right)}{\binom{\beta}{\alpha}} \right)^{\binom{\beta}{i}}.
\]

    \subparagraph*{Xor-Clauses with $\alpha$ Universal Variables and a Constant Term.}

We consider Xor-clauses of the form
\[
  y_i \oplus y_j = e_1 x_1 \oplus \cdots \oplus e_\beta x_\beta \oplus e_{\beta+1},
\]
where exactly $\alpha$ of the bits $e_1, \ldots, e_\beta$ are 1's.
The set~$E$ contains now~$(\beta+1)$-tuples of bit.
To take the term $e_{\beta+1}$ into account,
any solution of such a formula now assigns to each existential variable~$y_i$
a affine combination of universal variables plus a constant~$0$ or~$1$, so
\[
  y_i = \eta_{i,1} x_1 \oplus \cdots \oplus \eta_{i,\beta} x_\beta \oplus \eta_{i, \beta+1}.
\]
where~$\eta_i$ is a~$(\beta+1)$-tuple of bits.
Let~$E_\alpha$ denote the set~$E$ corresponding to Xor-clauses
with~$\alpha$ universal variables,
and~$E_{\alpha, \epsilon}$
the corresponding set with the option of adding a constant.
Each~$\beta$-tuple~$e$ in~$E_\alpha$ matches
two~$(\beta+1)$-tuples in~$E_{\alpha, \epsilon}$:
both with the same first~$\beta$ bits as~$e$,
one with last bit~$0$, and the other with last bit~$1$.
Therefore, the matrix~$R^{(E_{\alpha, \epsilon})}$ is equal to
\[
  \smat{ R^{(E_{\alpha})} & R^{(E_{\alpha})}\\ R^{(E_{\alpha})}& R^{(E_{\alpha})} }.
\]
The spectrum of $R^{(E_{\alpha, \epsilon})}$ is the same
as the one of $R^{(E_{\alpha})}$,
except that each eigenvalue is doubled and the eigenvalue~$0$ 
is added with multiplicity~$2^\beta$.
We then obtain the parameters
$q = 1/\sigma = 2^{\beta + 1}$,
$c  = 1$ if~$\alpha$ is odd and~$c = 0$ otherwise,
$\delta = 2 \beta^\alpha$
and~$\chi(X)$ as in the following theorem.
Injecting those parameters into Theorem~\ref{th:main_vt}
and dividing by the total number of~$E_{\alpha, \epsilon}$-formulas
\[
  \frac{n^{2m}}{2^m m!} (2 \beta^\alpha)^m
\]
gives the following result.

\begin{theorem} \label{th:qxorsat}
  Let us consider a random quantified $2$-Xor formula of the form~\eqref{eq:2QXorSAT}
  with $n$~existential variables,
  $\beta$~universal variables
  and $m$~Xor-clauses 
  containing two existential variables, 
  $\alpha$ universal variables and one constant term in~$\{0,1\}$.
  Let~$\psat$ denote the probability that such a formula is satisfiable,
  $c = 2$ if~$\alpha$ is even, $c = 1$ otherwise, and
  \[
   \chi(X) = \prod_{i=1}^{\beta+1-c} \left(1- 2 \left( 1 - X 2 i / \beta  \right)^\alpha \right)^{\binom{\beta}{i}},
  \] %\frac{2 i}{\beta}
  then
  \begin{itemize}
  \item as $n$ is large and~$m/n$ is restrained to a closed interval of~$]0,1/2[$,
  \[
  \psat \sim \chi\left(2m/n\right)^{-2^{-\beta-2}} 
      \left( 1- 2m/n \right)^{\frac{1}{2} - c 2^{-\beta-2}}
%   { (1+\bigO\left(n^{-\frac{1}{2} + \epsilon'}\right)),}
  \]  
  \item for any fixed real value~$x$
  and~$m = \frac{n}{2}(1+\mu n^{-1/3})$,
  \[
   \lim_{n \rightarrow \infty} n^{(1-c 2^{-\beta-1})/6} \psat = \chi(1)^{-2^{-\beta - 2}} \phi_{c 2^{-\beta-1}}(\mu),
  \]
  \end{itemize}
  where~$\phi_\sigma(\mu)$ is a computable function defined in Theorem~\ref{th:main_vt}.
\end{theorem}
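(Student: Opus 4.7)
The plan is to identify satisfiable $E_{\alpha, \epsilon}$-formulas with $n$ existential variables and $m$ clauses with $(R^{(E_{\alpha, \epsilon})}, 2^{-\beta-1})$-multigraphs via Lemma~\ref{th:bij2qxor}, apply Theorem~\ref{th:main_vt} to the latter, and divide by the total number of $E_{\alpha,\epsilon}$-formulas $\frac{n^{2m}}{2^m m!}(2\beta^\alpha)^m$ to recover $\psat$.

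First, I would extract the spectral data required by Theorem~\ref{th:main_vt} from the block decomposition
\[
  R^{(E_{\alpha, \epsilon})} = \begin{pmatrix} R^{(E_\alpha)} & R^{(E_\alpha)} \\ R^{(E_\alpha)} & R^{(E_\alpha)} \end{pmatrix}.
\]
For every eigenvector $v$ of $R^{(E_\alpha)}$ with eigenvalue $\lambda$, the vectors $\transp{(v, v)}$ and $\transp{(v, -v)}$ are eigenvectors of $R^{(E_{\alpha, \epsilon})}$ with eigenvalues $2\lambda$ and $0$ respectively. Hence $\spectre(R^{(E_{\alpha, \epsilon})}) = \{ 2\lambda : \lambda \in \spectre(R^{(E_\alpha)})\}$ (with the original multiplicities) together with an extra $2^\beta$ copies of $0$, which yields $\delta = 2\beta^\alpha$. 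By the corollary of Lemma~\ref{th:cm} recalled in the first remark after Theorem~\ref{th:main_vt}, $c$ is the multiplicity of $\delta$; this is $1$ when $\alpha$ is odd and $2$ when $\alpha$ is even, since then $(-\beta)^\alpha = \beta^\alpha$ contributes a second top eigenvector.

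Next, I would verify vertex-transitivity of $R^{(E_{\alpha, \epsilon})}$ by exhibiting the translation symmetries $\bin{i} \mapsto \bin{i} \oplus v$ on $\{0,1\}^{\beta+1}$, which act transitively on vertices and preserve $R^{(E_{\alpha, \epsilon})}_{i,j}$ since this entry depends only on $\bin{i} \oplus \bin{j}$. The polynomial $\chi(X)$ is then obtained from the ratios $\lambda/\delta$ for non-leading eigenvalues: the doubled nonzero eigenvalues of $R^{(E_\alpha)}$ yield exactly the same ratios $(1 - 2i/\beta)^\alpha$ with multiplicities $\binom{\beta}{i}$ as in the preceding subsection on $\alpha$ universal variables, while the extra $2^\beta$ zero eigenvalues contribute only trivial factors $(1 - 0 \cdot X) = 1$.

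Finally, I would substitute $q = 2^{\beta+1}$, $\sigma = 2^{-\beta-1}$, $\delta = 2\beta^\alpha$, $c$, and $\chi$ into both asymptotic formulas of Theorem~\ref{th:main_vt}. Two simplifications collapse the bulk of the expression: $\sigma q = 1$ eliminates the factor $(\sigma q)^{n-m}$, and dividing by $\frac{n^{2m}}{2^m m!}(2\beta^\alpha)^m$ kills both $\frac{n^{2m}}{2^m m!}$ and $\delta^m$. Plugging in $\sigma/2 = 2^{-\beta-2}$ and $(1-c\sigma)/2 = \frac{1}{2} - c \cdot 2^{-\beta-2}$ recovers the subcritical estimate, and the analogous substitutions in the critical-window formula yield the stated limit involving $\phi_{c \cdot 2^{-\beta-1}}(\mu)$ and $\chi(1)^{-2^{-\beta-2}}$. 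The main obstacle is purely bookkeeping: determining $c$ correctly in each parity regime and checking that the factors of $2$ introduced by doubling the eigenvalues cancel cleanly against $\delta^m$ in the denominator. None of the individual steps is deep, but the cancellations must be tracked with care.
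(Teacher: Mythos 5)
Your proposal follows the paper's own proof exactly: read the spectrum of $R^{(E_{\alpha,\epsilon})}$ off the block decomposition (doubled eigenvalues plus $2^\beta$ zeros), extract $q=2^{\beta+1}$, $\sigma=2^{-\beta-1}$, $\delta=2\beta^\alpha$, $c$ and $\chi$, apply Theorem~\ref{th:main_vt}, and divide by the total count $\frac{n^{2m}}{2^m m!}(2\beta^\alpha)^m$, with the cancellations $\sigma q=1$ and $\delta^m$ handled as in the paper. Your $\chi(X)=\prod_{i=1}^{\beta+1-c}\bigl(1-X(1-2i/\beta)^\alpha\bigr)^{\binom{\beta}{i}}$ is the one that actually follows from the definition of $\chi$ (which must satisfy $\chi(0)=1$), so the differently shaped product displayed in the theorem statement appears to be a typo of the paper rather than a gap in your argument; likewise your parity rule for $c$ matches the theorem statement, not the garbled ``$c=0$'' in the paper's preceding paragraph.
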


Combining the previous results, 
we could as well consider quantified~$2$-Xor formulas
with~$\alpha$ distinct universal variables and a constant term
in each clause.

    \subsection{Random $k$-Coloring of Random Multigraphs}

The following theorem, due to Wright~\cite{W72}, enumerates
the properly $q$-colored multigraphs.
We propose a new proof using the formalism of $\cms$-multigraphs.

\begin{theorem} \label{th:kcolor}
  If $m/n$ is fixed in $]0,1/2[$,
  the asymptotic probability that a random $q$-coloring 
  of a random multigraph with~$n$ vertices and $m$ edges
  is proper is
  \[
    \left( 1 - \frac{1}{q} \right)^m 
    \left( 1 + \frac{1}{q-1} \frac{2 m}{n} \right)^{-\frac{q-1}{2}}.
  \]  
\end{theorem}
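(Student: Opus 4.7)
The plan is to follow exactly the three-step recipe announced at the start of Section~\ref{sec:applications}, with the matrix~$\cm$ chosen so that $\cms$-multigraphs correspond to properly $q$-colored multigraphs.

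First I set up the encoding. Take $\cm = J - \Id$, the $q\times q$ matrix with $0$'s on the diagonal and $1$'s off the diagonal, and take $\sigma = 1$. Given a multigraph on $n$ vertices endowed with a $q$-coloring $c:[1,n]\to\{c_1,\dots,c_q\}$, regarded as a $\cms$-multigraph, each edge $\overline{vw}$ has weight $\cm_{c(v),c(w)}$, which is $1$ if $c(v)\neq c(w)$ and $0$ otherwise. Hence the weight of a colored multigraph is $\kappa(G)$ if the coloring is proper and $0$ otherwise. Summing over all colorings, $g_{\cm,1}(n,m)$ is exactly the total number (with compensation factor) of properly $q$-colored multigraphs. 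Since the total number of $q$-colorings of a multigraph with $n$ vertices is $q^n$ and the sum of compensation factors over all multigraphs with $n$ vertices and $m$ edges is $\frac{n^{2m}}{2^m m!}$, the probability we want is
\[
  \frac{g_{\cm,1}(n,m)}{q^n\,\frac{n^{2m}}{2^m m!}}.
\]

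Second I read off the parameters. The matrix $\cm = J - \Id$ is the adjacency matrix of the complete graph $K_q$, which is plainly vertex-transitive (the symmetric group acts on the vertex set preserving adjacency) and connected, so $\cgcm = 1$. Its spectrum is well known: the eigenvalue $q-1$, with eigenvector $\vec{1}$, has multiplicity $1$, and the eigenvalue $-1$ has multiplicity $q-1$. Thus $\delta = q-1$ and
\[
  \chi(X)  =  \prod_{\lambda\in\spectre(\cm)\setminus\delta}\!\left(1-\tfrac{\lambda}{\delta}X\right)  =  \left(1+\tfrac{X}{q-1}\right)^{q-1}.
\]

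Third I apply Theorem~\ref{th:main_vt} in the subcritical regime $m/n$ in a compact subinterval of $]0,1/2[$. With $\sigma=1$ and $c=1$ the exponent $(1-c\sigma)/2$ vanishes, so the factor $(1-2m/n)^{(1-c\sigma)/2}$ is identically $1$, and we obtain
\[
  g_{\cm,1}(n,m)  \sim  \frac{n^{2m}}{2^m m!}\cdot\frac{(q-1)^m\, q^{\,n-m}}{\bigl(1+\tfrac{1}{q-1}\tfrac{2m}{n}\bigr)^{(q-1)/2}}.
\]
Dividing by $q^n\,\frac{n^{2m}}{2^m m!}$ and simplifying $(q-1)^m q^{-m} = (1-1/q)^m$ yields the announced formula.

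There is no real obstacle here; the whole argument is a direct verification once the matrix is identified. The only point demanding a moment of attention is confirming that $J-\Id$ is vertex-transitive in the edge-weighted sense of Lemma~\ref{th:cm} (which is immediate since all off-diagonal entries are equal) and that its spectrum is $\{q-1,-1,\dots,-1\}$, which is the textbook computation.
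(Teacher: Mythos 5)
Your proof is correct and follows exactly the route of the paper: encode properly $q$-colored multigraphs as $(J-\Id,1)$-multigraphs, read off $\delta = q-1$, $c=1$, $\chi(X) = (1+X/(q-1))^{q-1}$, apply Theorem~\ref{th:main_vt} in the subcritical regime, and divide by $q^n \frac{n^{2m}}{2^m m!}$. The only difference is that you spell out the spectral computation and the final simplification, which the paper leaves implicit.
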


\begin{proof}
A multigraph properly~$q$-colored
is a~$(\cm,1)$-multigraph where~$\cm_{i,j} = 1$ for all $i \neq j$ and $0$ otherwise.
Their asymptotics is derived from Theorem~\ref{th:main_vt}
with the parameters~$c=1$, $\chi(X) = \left( 1 + X/(q-1) \right)^{q-1}$.
It is then divided by the total number of multigraphs 
with~$n$ vertices and~$m$ edges 
randomly (and possibly not properly) $q$-colored,
which is~$\frac{n^{2m}}{2^m m!} q^n$.
\end{proof}
In fact, this result holds for any positive fixed value of~$m/n$~\cite{W72}.
There is a proof of this result in the setting of inhomogeneous multigraphs,
which is an interesting development that will be part of a forthcoming publication.
Here, we just sketch this proof.
It starts with the direct expression
\[
  g_{\cm, 1}(n,m)
=
  \frac{1}{2^m m!} \sum_{\substack{\vect{n} \in \NN^q \\ \gvect{1} \vect{n} = n}}
  \binom{n}{n_1, \ldots, n_q}
  \left( \gvect{n} \cm \vect{n} \right)^m.
\]
We then apply Theorem 5.4.8 of~\cite{PW13} to conclude.
This approach can be generalized to any irreducible aperiodic matrix~$\cm$
when~$\sigma = 1$.

Theorem~\ref{th:kcolor} is not to be confused with
an asymptotic of~$q$-colorable multigraphs,
because a colorable multigraph may have
several proper colorings.

    \section{Proof of Theorem~\ref{th:main_vt}} \label{sec:proof}

    \subsection{Properties of Vertex-Transitive Matrices}

The notation~$\vect{1}$ stands for the column vector
with all coefficients equal to~$1$.
The structure of a vertex-transitive matrix implies the following properties:
\begin{lemma} \label{th:cm}
  Let $\cm$ denote a~$q \times q$ vertex-transitive matrix
  with non-negative coefficients, then there exist
 \begin{enumerate}
  \item an integer~$\cgcm$, a permutation matrix~$P$ 
    and a square matrix~$\ccm$ of dimension~$\frac{q}{\cgcm} \times \frac{q}{\cgcm}$ such that
    $P \cm P^{-1} = \diag{\ccm, \hdots, \ccm}$,
    where the block diagonal matrix contains $\cgcm$~blocks,
  \item a positive~$\delta$, eigenvalue of~$S$ of multiplicity~$1$, such that
    $\ccm \vect{1} = \delta \vect{1}$
    and for all~$\lambda$ in the spectrum of $\ccm$, $|\lambda| \leq \delta$,
  \item an orthogonal matrix~$Q$
    and a diagonal matrix~$\Delta$ such that $\ccm = Q \Delta \transp{Q}$,
    $\Delta_{1,1} = \delta$ and $Q \vect{e_1} = \sqrt{\frac{c}{q}} \vect{1}$.
 \end{enumerate}
\end{lemma}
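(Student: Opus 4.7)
The plan is to treat the three items in sequence, each time exploiting that the weighted multigraph $G$ associated to~$\cm$ has a vertex-transitive automorphism group preserving edge weights.

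First I would prove item~(1). An automorphism of $G$ permutes the connected components as weighted subgraphs, and transitivity of the action on vertices forces the induced action on components to be itself transitive. Consequently all components are pairwise isomorphic as weighted graphs, and in particular share a common weighted adjacency matrix $\ccm$ of size $\frac{q}{\cgcm}\times\frac{q}{\cgcm}$, where $\cgcm$ is the number of components. Relabelling the vertices so that those of each component are placed in consecutive positions yields a permutation matrix $P$ realising $P\cm P^{-1} = \diag{\ccm, \ldots, \ccm}$ with $\cgcm$ blocks.

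For item~(2), the setwise stabiliser of a component still acts transitively on its vertices, so $\ccm$ itself corresponds to a vertex-transitive weighted graph. Any such weighted graph is regular: the row sum $\sum_j \ccm_{i,j}$, being invariant under automorphisms, takes the same value $\delta$ at every vertex, which gives $\ccm \vect{1} = \delta \vect{1}$ at once. Since $\ccm$ is non-negative and its underlying graph is connected, $\ccm$ is irreducible; Perron--Frobenius then yields both the simplicity of $\delta$ as an eigenvalue and the bound $|\lambda|\leq \delta$ for every $\lambda \in \spectre(\ccm)$.

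For item~(3), the symmetry of $\ccm$, inherited from that of $\cm$, combined with the spectral theorem provides an orthogonal $Q$ and a diagonal $\Delta$ with $\ccm = Q\Delta\transp{Q}$. Ordering the eigenvalues so that $\Delta_{1,1}=\delta$ is harmless, and simplicity of $\delta$ identifies its eigenspace with the line spanned by $\vect{1}$. The unit vector in this direction is $\vect{1}/\sqrt{q/\cgcm} = \sqrt{\cgcm/q}\,\vect{1}$, so after choosing the sign of the first column of $Q$ one obtains $Q\vect{e_1} = \sqrt{\cgcm/q}\,\vect{1}$.

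The only genuine subtlety lies in item~(1): one must carefully distinguish between an automorphism group merely permuting components and actually forcing them to carry identical weighted structures. A short orbit argument, together with the fact that automorphisms preserve edge weights, should suffice; the remainder is standard Perron--Frobenius theory together with the spectral theorem for real symmetric matrices.
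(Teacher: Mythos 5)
Your proof is correct and follows essentially the same route as the paper's: transitivity forces all components to be isomorphic (giving the block decomposition), regularity plus Perron--Frobenius gives item (2), and the spectral theorem gives item (3). You actually supply more detail than the paper does, in particular on the stabiliser argument and on the normalisation $Q\vect{e_1}=\sqrt{c/q}\,\vect{1}$.
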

\begin{proof}
Because the multigraph~$G$ associated to~$\cm$
is vertex-transitive, 
all pairs of connected components are isomorphic.
The matrix~$S$ denotes the weighted adjacency matrix
of one of those components and~$c$ is their number.
In an edge-weighted multigraph,
the \emph{degree} of a vertex $v$ is the sum
of the weights of the edges that contain $v$.
All the degrees in~$G$ are equal, otherwise
two vertices with different degrees could be distinguish,
so~$\vect{1}$ is an eigenvector of~$\cm$ 
and~$\delta$ denotes this common degree.
Applying the Perron-Fr\"obenius Theorem,
we conclude that the eigenvalue~$\delta$
has multiplicity~$1$
and is greater or equal in absolute value than
any other eigenvalue.
Finally, real symmetric matrices are diagonalizable by orthogonal matrices.
\end{proof}

Let~$\delta$ denote the greatest real eigenvalue of~$\cm$.
We can assume it to be equal to~$1$
without loss of generality,
replacing~$\cm$ by~$\frac{1}{\delta}\cm$
and~$g(n,m)$ by~$\delta^m g(n,m)$.
We can also assume that the number~$c$ 
of connected components of the multigraph encoded 
by~$\cm$ is~$1$:
the $(\cm,\sigma)$-multigraphs 
are in a one-to-one mapping with
the $(S, c \sigma)$-multigraphs 
where~$S$ is the adjacency matrix of one 
of the connected components.
In the rest of this section,
$\cm$ is assumed to be 
a~$q \times q$ irreducible vertex-transitive matrix
with greatest eigenvalue~$1$.

    \subsection{Trees and Unicyclic Components}

In~\cite{JKLP93}, graphs are decomposed in three parts:
trees, unicyclic components and \emph{complex} components~\cite{W77}.
Their generating functions are expressed in term of the Cayley tree function~$T(z)$
that counts the rooted labelled trees and is characterized by the equation
$ T(z) = z e^{T(z)}$.
We follow the same approach.

An \emph{$\cm$-tree} is a connected $\cm$-multigraph without cycle.
If one vertex is marked, we say that the tree is \emph{rooted}.
A connected $\cm$-multigraph with exactly one cycle
is called an \emph{$\cm$-unicyclic multigraph}.
Let $T_i(z)$, $U(z)$ and $V(z)$ denote the generating functions
of $\cm$-rooted trees with root of color~$i$,
unrooted trees and unicyclic multigraphs.
Let also $\vect{T}(z)$ denote the vector
$\transp{\left(\begin{smallmatrix} T_1(z) & \cdots & T_q(z) \end{smallmatrix}\right)}$.
A \emph{$\cm$-path of trees} is a colored directed path
that links two vertices (that may not be distinct)
of color~$i$ and~$j$, and
each internal vertex of the path is the root
of a colored $\cm$-rooted tree.
Its generating function is denoted by~$P_{i,j}(z)$.

\begin{lemma} \label{th:TUVPath}
  If $\cm$ is irreducible with greatest eigenvalue $1$,
  the generating functions of $\cm$-rooted trees, unrooted trees, unicyclic graphs
  and paths of trees are
  \begin{flalign*}
    &\textstyle \vect{T}(z) = T(z) \vect{1}
    &\textstyle  V(z) = -\frac{1}{2} \log(1-T(z)) -\frac{1}{2} \log( \chi(T(z)) ) \\
    &\textstyle U(z) = q ( T(z) - \frac{1}{2} T(z)^2) 
    &\textstyle P_{i,j}(z) = \frac{1}{q(1-T(z))} + 
      \sum_{l=2}^q Q_{i,l} Q_{j,l} \frac{\Delta_{l,l}}{1 - \Delta_{l,l} T(z)}
  \end{flalign*}
  where $T(z)$ is the Cayley tree function
  and $\cm = Q \Delta \transp{Q}$ as in Lemma~\ref{th:cm}.
\end{lemma}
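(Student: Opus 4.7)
The plan is to derive the four generating functions in order, exploiting that the principal eigenvector $\vect{1}$ of $\cm$ collapses the vector-valued equations to scalar ones in the Cayley tree function $T(z)$, while the remaining eigenvalues surface through the spectral decomposition $\cm = Q \Delta \transp{Q}$ to produce the factor $\chi(T(z))$.

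For the rooted trees, I would translate the recursive definition (a root together with a multiset of subtrees attached by edges whose weights depend on the endpoint colors) into the functional system
\[
  T_i(z) = z \exp\Bigl(\sum_{j=1}^q \cm_{i,j} T_j(z)\Bigr), \qquad i = 1, \ldots, q.
\]
Trying the ansatz $\vect{T}(z) = T(z)\vect{1}$ and invoking $\cm \vect{1} = \vect{1}$ from Lemma~\ref{th:cm} (the top eigenvalue having been normalized to $1$), the exponent reduces to $T(z)$ and the $q$ equations collapse to the Cayley equation $T(z) = z e^{T(z)}$. For the unrooted trees, the dissymmetry relation $z U'(z) = \sum_i T_i(z) = q T(z)$, combined with the classical identity $z T'(z) = T(z)/(1-T(z))$, verifies by direct differentiation that $U(z) = q\bigl(T(z) - T(z)^2/2\bigr)$.

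For the unicyclic components, I would follow the multigraph decomposition of~\cite{JKLP93}: such a component is a cycle of length $k \geq 1$ (a loop when $k = 1$, a double edge when $k = 2$) with a rooted $\cm$-tree attached at each cycle vertex. With the standard compensation $1/(2k)$ that accounts uniformly for the dihedral symmetry at length $\geq 3$, the loop factor at $k = 1$, and the multi-edge factor at $k = 2$, the contribution of cycles of length $k$ summed over vertex colorings is $\frac{1}{2k} \tr\bigl((T(z)\cm)^k\bigr) = \frac{T(z)^k}{2k} \tr(\cm^k)$, using $\vect{T}(z) = T(z)\vect{1}$. Summing over $k$ and writing $\tr(\cm^k) = \sum_\lambda \lambda^k$ gives
\[
  V(z) = \frac{1}{2} \sum_{k \geq 1} \frac{T(z)^k \tr(\cm^k)}{k} = -\frac{1}{2} \sum_{\lambda \in \spectre(\cm)} \log\bigl(1 - \lambda T(z)\bigr),
\]
and separating the contribution of the top eigenvalue from the rest yields the stated expression.

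Finally, a $\cm$-path of trees with $k$ internal vertices connecting a vertex of color $i$ to one of color $j$ comprises $k+1$ edges and $k$ rooted trees, contributing $T(z)^k (\cm^{k+1})_{i,j}$ after summing over the internal colors. Summing over $k \geq 0$ and applying $\cm = Q \Delta \transp{Q}$ yields the geometric series
\[
  P_{i,j}(z) = \sum_{l=1}^q Q_{i,l} Q_{j,l} \frac{\Delta_{l,l}}{1 - \Delta_{l,l} T(z)},
\]
and splitting off the $l = 1$ term via $\Delta_{1,1} = 1$ and $Q_{i,1} = Q_{j,1} = \sqrt{1/q}$ from Lemma~\ref{th:cm} produces the isolated summand $\frac{1}{q(1 - T(z))}$ plus the remaining sum over $l \geq 2$. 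The only genuinely delicate point is the uniform $1/(2k)$ cycle compensation at $k = 1$ and $k = 2$, which is classical in the multigraph formalism of~\cite{JKLP93} and needs only to be cited.
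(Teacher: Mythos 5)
Your proposal is correct and takes essentially the same route as the paper: the same functional system for rooted trees collapsed via $\cm \vect{1} = \vect{1}$, the same integration of the pointing relation for $U$, the same cycle-of-rooted-trees sum $\frac{1}{2k}\tr\bigl((T(z)\cm)^k\bigr)$ for $V$ (which the paper phrases via a marked, oriented cycle vertex, an equivalent presentation), and the same resolvent $\cm(\Id - T(z)\cm)^{-1}$ diagonalized by $Q$ for $P_{i,j}$. The paper's proof is a terse sketch of exactly these steps; yours simply fills in the details.
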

\begin{proof}
  Using the analytic combinatorics tools (a good reference is~\cite{FS09}), 
  the combinatorial specification of $\cm$-rooted trees translates into
  the following equations: for all~$i$,
  $T_i(z)$ is equal to~$z \exp(\row_i(\cm) \vect{T}(z))$.
  Since $\cm$ is vertex-transitive, for all~$i,j$, $T_i(z) = T_j(z)$, so
  $T_i(z) = \frac{1}{\delta} T(\delta z)$.
  an $\cm$-unrooted tree with a marked vertex is
  an $\cm$-rooted tree with root of unknown color, so
  $z U'(z) = \sum_{i=1}^q T_i(z)$.
  Similarly, an $\cm$-unicyclic graph with a marked vertex on its cycle
  and an orientation
  is an $\cm$-path of rooted trees, so
  $u \partial_u V(z,u) = \frac{1}{2} \sum_{i=1}^q \sum_{k \geq 1} (u T(z) \cm)^k_{i,i}$
  where $u$ marks the vertices of the cycle and $V(z) = V(z,1)$.
  Finally, $\PATH_{i,j}(z) = (\cm (\Id - T(z) \cm)^{-1})_{i,j}$
  and Lemma~\ref{th:cm} lead to the announced expression.
\end{proof}

Observe that at the first order, $U(z)$, $V(z)$ and $\PATH_{i,j}(z)$ 
are equal or proportional to their non-colored counterparts
$T(z) - \frac{1}{2} T(z)^2$, 
$-\frac{1}{2} \log(1-T(z))$
and $\frac{1}{1-T(z)}$.
Furthermore, the first order of~$\PATH_{i,j}(z)$ 
is independent of~$i$ and~$j$.

We will prove in Theorem~\ref{th:complexpart1}
that when~$m/n < 1/2$,
almost all $(\cm,\sigma)$-multigraphs
with~$n$ vertices and~$m$ edges
contain only trees and unicyclic components.
Theorem~\ref{th:subcritical} is then equivalent to
the first statement of Theorem~\ref{th:main_vt}.
\begin{theorem} \label{th:subcritical}
  With the notations of Theorem~\ref{th:main_vt},
  the number of $(\cm,\sigma)$-multigraphs 
  that contain only trees and unicyclic components is
  \[
    g^{(0)}_{\cm,\sigma}(n,m)
    \sim
    \frac{n^{2m}}{2^m m!} 
    \left( 1 - \frac{2m}{n}\right)^{\frac{1-\sigma}{2}}
    \frac{(q \sigma)^{n-m}}{\chi\left(\frac{2m}{n}\right)^{\sigma/2}}.
  \]
\end{theorem}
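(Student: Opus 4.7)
The plan is to derive Theorem~\ref{th:subcritical} by coefficient extraction from the generating functions of Lemma~\ref{th:TUVPath}, followed by a saddle-point evaluation. The starting observation is that in a $(\cm,\sigma)$-multigraph built solely from trees and unicyclic components with $n$ vertices and $m$ edges, the number of tree components equals exactly $n-m$ (each tree contributes one more vertex than edge; each unicyclic, equal numbers). Introducing an auxiliary variable $u$ to mark tree components, the EGF of such multigraphs is $\exp(\sigma(uU(z)+V(z)))$, so extracting $[u^{n-m}]$ yields
\[
g^{(0)}_{\cm,\sigma}(n,m) = \frac{n!\,\sigma^{n-m}}{(n-m)!}\,[z^n]\,U(z)^{n-m}\exp(\sigma V(z)).
\]
Substituting $U(z)=qT(z)(1-T(z)/2)$ and $\exp(\sigma V(z))=(1-T(z))^{-\sigma/2}\chi(T(z))^{-\sigma/2}$ from Lemma~\ref{th:TUVPath}, the problem reduces to computing the asymptotics of $J_n := [z^n]\,T(z)^{n-m}(1-T(z)/2)^{n-m}(1-T(z))^{-\sigma/2}\chi(T(z))^{-\sigma/2}$.

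Next I would convert $J_n$ to a contour integral using the substitution $t=T(z)$, i.e.\ $z=te^{-t}$, which yields the standard identity $[z^n]f(T(z))=\frac{1}{2\pi i}\oint f(t)(1-t)e^{nt}/t^{n+1}\,dt$. After simplification,
\[
J_n=\frac{1}{2\pi i}\oint g(\tau)e^{n\phi(\tau)}\,d\tau,\qquad g(\tau)=\frac{(1-\tau)^{1-\sigma/2}\chi(\tau)^{-\sigma/2}}{\tau},
\]
with $\phi(\tau)=-\alpha\log\tau+(1-\alpha)\log\frac{2-\tau}{2}+\tau$ and $\alpha=m/n$. The saddle equation $\phi'(\tau)=0$ factors as $(\tau-2\alpha)(\tau-1)=0$; the physical saddle $\tau_0=2m/n$ has $\phi''(\tau_0)=(1-2\alpha)/(4\alpha(1-\alpha))>0$. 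For $m/n$ in a closed sub-interval of $(0,1/2)$, $\tau_0$ stays bounded away from $0$, from $1$, and from the other singularities $\delta/\lambda$ of $\chi$ (all of which satisfy $|\delta/\lambda|\ge 1$ by Perron--Frobenius).

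The saddle-point estimate along the vertical contour $\tau=\tau_0+is$ then gives $J_n\sim g(\tau_0)e^{n\phi(\tau_0)}/\sqrt{2\pi n\phi''(\tau_0)}$, with $e^{n\phi(\tau_0)}=(2m/n)^{-m}(1-m/n)^{n-m}e^{2m}$. Multiplying by the prefactor $n!(q\sigma)^{n-m}/(n-m)!$ and applying Stirling's formula to $n!/(n-m)!$ and to $m!$, the factors $e^{2m}$ and $(n-m)^{n-m}$ cancel, the square-root prefactors consolidate into $1/\sqrt{2\pi m}$, and the remaining powers of $n$, $m$, $n-m$ reorganize cleanly into $n^{2m}/(2^m m!)$, reproducing the claimed equivalence.

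The principal technical obstacle is making the saddle-point estimate uniform in $m/n$ across a closed sub-interval of $(0,1/2)$: one must bound the tail of the contour integral away from $\tau_0$ by something uniformly exponentially smaller than the main contribution, and confirm that $g(\tau)$ is analytic on a common neighborhood of every relevant $\tau_0$. The latter fails as $\tau_0\to 1$, which is precisely why $m/n$ is required to stay strictly below $1/2$. Both verifications are routine within the steepest-descent framework, but they demand careful bookkeeping to maintain uniformity; once established, they imply the claimed asymptotic.
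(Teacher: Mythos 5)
Your proposal is correct and follows essentially the same route as the paper: the identical starting identity $g^{(0)}_{\cm,\sigma}(n,m)=\frac{n!\,\sigma^{n-m}}{(n-m)!}[z^n]U(z)^{n-m}e^{\sigma V(z)}$ obtained from the set-of-trees/set-of-unicyclics decomposition, followed by a large-powers saddle-point evaluation. The only difference is that the paper delegates the analytic step to Theorem VIII.8 of~\cite{FS09}, whereas you unpack that theorem by hand via the substitution $t=T(z)$; your saddle $\tau_0=2m/n$, the value $\phi''(\tau_0)=(1-2\alpha)/(4\alpha(1-\alpha))$, and the final Stirling bookkeeping all check out.
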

\begin{proof}
A multigraph without complex component
is a set of~$n-m$ trees and of unicyclic components, so
\begin{equation} \label{eq:subcritical}
  g^{(0)}_{\cm,\sigma}(n,m)
  =
  n! [z^n] \frac{(\sigma U(z))^{n-m}}{(n-m)!} e^{\sigma V(z)}.
\end{equation}
We then apply Theorem VIII.8 of~\cite[p.587]{FS09}
to derive the asymptotics of the coefficient extraction.
\end{proof}

    \subsection{Complex Components}

The notions of \emph{excess} and \emph{kernel}
were first combined with a generating function approach in~\cite{W77}
and then~\cite{JKLP93}. This section relies on their work.
The \emph{excess} of a graph is defined as the difference between 
the number of edges and of vertices~$k = m-n$.
A component with excess~$(-1)$ (resp.~$0$) is a tree (resp. unicyclic).
The \emph{complex} part of a multigraph
is the set of its connected components that have
positive excess.
Deleting the vertices of degree one 
and merging the vertices of degree two,
each graph can be reduced to a simpler graph,
called its \emph{kernel}, 
with minimum degree at least three.
Reciprocally, any such graph can be developed by
replacing edges by paths and adding trees to the vertices.
The set~$\kernels_k$ of kernels of excess~$k$ is finite.
Among them, the kernels that maximize the number of edges
are the cubic (i.e. $3$-regular) multigraphs~$\cubics_k$. 
with~$2 k$ vertices and~$3 k$~edges.
Their number, counted
with their compensation factors and a weight~$\sigma$ 
for each connected component, is computable
\[ 
  |\cubics_{k,\sigma}| = \sum_{G \in \cubics_k} \kappa(G) \sigma^{\cc(G)} 
  = (2k)! [z^{2k}]\left(\sum_{n\geq 0} \frac{(6n)!}{(3!)^{2n} 2^{3n} (3n)!} \frac{z^{2n}}{(2n)!}\right)^\sigma. 
\]
The generating function of complex (i.e.~without trees and unicyclic components) 
$(\cm,\sigma)$-multigraphs of excess~$k$ is
\begin{equation} \label{eq:K_k_exact}
  K_{k,\sigma}(z) = 
    \sum_{G \in \kernels_k}
    \sum_{\vect{c} \in [1,q]^{|G|}}
      \frac{\kappa(G) \sigma^{\cc(G)}}{|G|!}
      \prod_{i \in [1,|G|]} T_{c_i}(z) %\right)\left( 
      \prod_{(i,j) \in \operatorname{edge}(G)} \PATH_{c_i,c_j}(z). %\right).\left( 
\end{equation}
Since~$\kernels_k$ is finite, this generating function
is a rational function in~$T(z)$.
In its partial fraction decomposition,
the term with denominator containing the highest power of~$1-T(z)$
is
\begin{equation} \label{eq:Kcubic}
  \frac{|\cubics_{k,\sigma}|}{(2k)! q^k} \frac{T(z)^{2k}}{(1-T(z))^{3k}}.
\end{equation}

\begin{theorem} \label{th:complexpart1}
When~$m/n < 1/2$ is fixed, almost all $(\cm,\sigma)$-multigraphs
have an empty complex part.
\end{theorem}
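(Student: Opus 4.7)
The plan is to refine the generating-function decomposition of Theorem~\ref{th:subcritical} by stratifying $\cms$-multigraphs according to the total excess~$k$ of their complex part. Since each tree contributes $-1$ to $m-n$, each unicyclic component contributes $0$, and the complex part contributes exactly~$k$, a multigraph with complex part of total excess~$k$ contains exactly $n-m+k$ tree components. This yields
\begin{equation*}
g^{(k)}_{\cm,\sigma}(n,m) = n!\,[z^n]\,\frac{(\sigma U(z))^{n-m+k}}{(n-m+k)!}\,e^{\sigma V(z)}\,K_{k,\sigma}(z),
\end{equation*}
with the convention $K_{0,\sigma}(z) = 1$, and $g_{\cm,\sigma}(n,m) = \sum_{k \geq 0} g^{(k)}_{\cm,\sigma}(n,m)$. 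The theorem is equivalent to the statement that $\sum_{k \geq 1} g^{(k)}_{\cm,\sigma}(n,m) = o\bigl(g^{(0)}_{\cm,\sigma}(n,m)\bigr)$, uniformly for $m/n$ in a compact subset of~$]0, 1/2[$.

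I would next apply the large-powers saddle-point theorem (Theorem~VIII.8 of~\cite{FS09}, already used in the proof of Theorem~\ref{th:subcritical}) to both the numerator and the denominator of the ratio $g^{(k)}_{\cm,\sigma}(n,m)/g^{(0)}_{\cm,\sigma}(n,m)$. The saddle point $z_0$ for the denominator satisfies $(n-m)\,z_0 U'(z_0)/U(z_0) = n$; using $U(z) = q\bigl(T(z) - T(z)^2/2\bigr)$ and $z T'(z) = T(z)/(1 - T(z))$, this reduces to the explicit condition $T(z_0) = 2m/n < 1$. For the numerator the exponent changes from $n-m$ to $n-m+k$, so the saddle shifts only by $O(k/n)$ and stays in a compact subset of $(0, e^{-1})$ as long as $k = o(n)$. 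Up to subexponential corrections, the ratio becomes
\begin{equation*}
\frac{g^{(k)}_{\cm,\sigma}(n,m)}{g^{(0)}_{\cm,\sigma}(n,m)} \sim \frac{(\sigma U(z_0))^k\,K_{k,\sigma}(z_0)}{(n-m+1)(n-m+2)\cdots(n-m+k)}.
\end{equation*}

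It remains to control $K_{k,\sigma}(z_0)$. Every kernel in $\kernels_k$ has minimum degree at least~$3$, hence at most $2k$ vertices and $3k$ edges. Combined with uniform upper bounds on $T(z_0)$ and on the $\PATH_{i,j}(z_0)$ over the compact range of $m/n$, the expression~\eqref{eq:K_k_exact} gives $K_{k,\sigma}(z_0) \leq |\kernels_k|\,B^k$ for some constant~$B$. Plugging in the classical factorial-type bound on $|\kernels_k|$ from~\cite{JKLP93} and the $(n-m+k)!/(n-m)!$ denominator, the contribution of each $k \geq 1$ is at most $(C/n)^k$ times a tame polynomial factor; summing over~$k$ gives a total of order $O(1/n) = o(1)$. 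The extreme range $k = \Theta(n)$ is handled by noting that a complex part of excess~$k$ requires at least $2k$ vertices, so $k \leq n/2$, and the factorial denominator there overwhelms any possible numerator growth.

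The main obstacle is obtaining a uniform and usable upper bound on $K_{k,\sigma}(z_0)$ across the entire relevant range of~$k$: the inhomogeneous weights~$\cm_{i,j}$ and the colored path generating functions $\PATH_{i,j}(z)$ make the combinatorial bookkeeping more delicate than in the scalar setting of~\cite{JKLP93}. The natural way to overcome this is to diagonalize~$\cm$ via the spectral decomposition of Lemma~\ref{th:cm}, thereby reducing the colored kernel estimates to their scalar counterparts and recovering the classical bounds on $|\kernels_k|$.
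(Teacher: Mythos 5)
Your stratification by the excess~$k$ of the complex part, the identity for $g^{(k)}_{\cm,\sigma}(n,m)$, and the reduction to $\sum_{k\ge 1} g^{(k)}_{\cm,\sigma}(n,m) = o\bigl(g^{(0)}_{\cm,\sigma}(n,m)\bigr)$ all coincide with the paper (Equation~\eqref{eq:gk}), and your saddle point $T(z_0)=2m/n$ is correct. The gap is in the control of the sum over~$k$. First, the claimed ``$(C/n)^k$ times a tame polynomial factor'' is not what your estimate delivers: $K_{k,\sigma}(z_0)$ grows superexponentially in~$k$ --- already the leading term~\eqref{eq:Kcubic} carries $|\cubics_{k,\sigma}|/(2k)!$, which by Stirling is of order $(3k/(2e))^k$ up to exponential factors --- so after dividing by $(n-m+1)\cdots(n-m+k)\ge (n-m)^k$ the $k$-th term is of order $(Ck/n)^k$, not $(C/n)^k$. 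That is still summable to $O(1/n)$ for $k\le \epsilon n$ with $\epsilon$ small, but for $k=\Theta(n)$ the bound $(Ck/n)^k$ is exponentially large once $C>2$, and $C$ blows up as $m/n\to 1/2$. Second, your rescue of that range rests on a false premise: in the multigraph model a complex part of excess~$k$ does \emph{not} require $2k$ vertices (a single vertex carrying $k+1$ loops already has excess~$k$; the $2k$ vertices of a cubic kernel are an \emph{upper} bound on kernel size, not a lower bound on component size), so the only a priori restriction is $k\le m$. In that range neither the asymptotic equivalence you wrote for the ratio (the saddle point moves with~$k$, and the pole of order~$3k$ of $K_{k,\sigma}$ at $T=1$ interacts with it) nor the factorial denominator closes the argument; one would have to re-optimize the radius in the trivial bound $[z^n]F(z)\le F(\rho)\rho^{-n}$ separately for each~$k$. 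Your closing suggestion (diagonalize~$\cm$) does not address this: the inhomogeneity is not the obstacle, since $T(z_0)$ and the $\PATH_{i,j}(z_0)$ are uniformly bounded on compacta; the obstacle is the $k^k$ growth of the kernel counts against a denominator of only $(n-m)^k$.

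The paper sidesteps the entire computation with a comparison argument. Writing $g^{(k)}_{(1),1}(n,m)$ for the classical counts (one color, unit weights, $\sigma=1$), every $\cms$-multigraph is an ordinary multigraph decorated by at most $q^n$ colorings, edge weights at most $r^m$ with $r=\max_{i,j}\cm_{i,j}$, and component weights at most $\max(\sigma,1)^n$, giving $g^{(k)}_{\cm,\sigma}(n,m)\le q^n r^m \max(\sigma,1)^n\, g^{(k)}_{(1),1}(n,m)$; in the other direction Theorem~\ref{th:subcritical} yields $g^{(0)}_{\cm,\sigma}(n,m)\ge C_1 (q\sigma)^{n-m} g^{(0)}_{(1),1}(n,m)$. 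The ratio is therefore at most $C_2(C_3 r^{m/n})^n$ times the classical ratio, whose sum over $k\ge1$ is already known to vanish, and the exponential prefactor is killed by rescaling $\cm\mapsto\alpha\cm$, which leaves every ratio $g^{(k)}/g^{(0)}$ invariant. This transfers the homogeneous result wholesale, uniformly in~$k$, and requires no analysis of $K_{k,\sigma}$ at all. If you want to keep your direct route, the missing piece is a uniform bound valid for $k$ up to~$m$; as written, the proof does not go through.
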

\begin{proof}
In all the proof, $m/n < 1/2$ is assumed to be fixed.
A multigraph with~$n$ vertices, $m$ edges
and complex part of excess~$k$ is
a set of~$n-m+k$ trees, a set of unicyclic components
and a complex part. Therefore, the number of such~$(R,\sigma)$-multigraphs is
\begin{equation} \label{eq:gk}
  g^{(k)}_{\cm,\sigma}(n,m) 
  = 
  n! [z^n] \frac{(\sigma U(z))^{n-m+k}}{(n-m+k)!} e^{\sigma V(z)} K_{k,\sigma}(z).
\end{equation}
With~$g^{(0)}_{\cm,\sigma}(n,m)$ defined as in Theorem~\ref{th:subcritical},
the theorem states that
when~$m/n < 1/2$ is fixed,
\[
  g_{\cm,\sigma}(n,m) 
  \sim g^{(0)}_{\cm,\sigma}(n,m).
\]
Since
\[
  g_{\cm,\sigma}(n,m) 
  =
  \sum_{k \geq 0} g^{(k)}_{\cm,\sigma}(n,m),
\]
this is equivalent with
\begin{equation} \label{eq:complexpart1sum}
  \lim_{n \rightarrow \infty}
  \sum_{k \geq 1} \frac{g^{(k)}_{\cm,\sigma}(n,m) }{g^{(0)}_{\cm,\sigma}(n,m) }
  = 0.
\end{equation}

It is well known that Theorem~\ref{th:complexpart1} holds for classic multigraphs.
Therefore, if $g^{(k)}_{(1),1}(n,m)$ denotes the sum
of the compensation factors of multigraphs
with $n$ vertices, $m$ edges and complex part of excess $k$,
then 
$g^{(0)}_{(1),1}(n,m)$ has the same asymptotics 
as the total number of multigraphs
\[
  g^{(0)}_{(1),1}(n,m) \sim \frac{n^{2m}}{2^m m!}.
\]
Combined with Theorem~\ref{th:subcritical}, this equivalence implies
that there exists a constant $C_1$, which depends only on $m/n$,
such that for $n$ large enough,
\begin{equation} \label{eq:minor}
  g^{(0)}_{R,\sigma}(n,m) \geq C_1 (q \sigma)^{n-m} g^{(0)}_{(1),1}(n,m).
\end{equation}
Since an $\cms$-multigraph is a multigraph
where each vertex has a color among a set of size $q$,
each edge has a weight at most $r = \max_{i,j} R_{i,j}$
and each connected component a weight $\sigma$,
\begin{equation} \label{eq:major}
  g^{(k)}_{R,\sigma}(n,m) \leq q^n r^m \max(\sigma, 1)^n g^{(k)}_{(1),1}(n,m).
\end{equation}
Combining Equations~\eqref{eq:minor} and~\eqref{eq:major},
we conclude that there exist two constants $C_2$ and $C_3$,
independent of $n$ and $k$, such that for $n$ large enough,
\[
  \frac{g^{(k)}_{\cm,\sigma}(n,m)}{g^{(0)}_{\cm,\sigma}(n,m)}
  \leq
  C_2 (C_3 r^{m/n})^n 
  \frac{g^{(k)}_{(1),1}(n,m)}{g^{(0)}_{(1),1}(n,m)}
\]
where $r$ is the maximum of the coefficients of $\cm$.

Since Theorem~\ref{th:complexpart1} is equivalent with Equation~\eqref{eq:complexpart1sum}
and holds for classic multigraphs, the previous inequality proves that
\[
  \lim_n \sum_{k \geq 0} \frac{g^{(k)}_{\cm,\sigma}(n,m)}{g^{(0)}_{\cm,\sigma}(n,m)} = 0
\]
as soon as $C_3 r^{m/n}$ is smaller than~$1$, 
i.e. for matrices~$\cm$ with small enough coefficients.
But Theorem~\ref{th:complexpart1} is independent of the size of the coefficients of~$\cm$,
because this matrix can be replaced by $\alpha \cm$ for any positive $\alpha$ without
changing the structure of the graphs.
\end{proof}

\begin{theorem} \label{th:complexpart}
With the notations of Theorem~\ref{th:main_vt},
when~$m = \frac{n}{2} ( 1 + \mu n^{-1/3} )$
and~$|\mu| \leq n^{1/12}$,
the asymptotic number of $(\cm,\sigma)$-multigraphs
with complex part of excess~$k$ is
\[
  g^{(k)}_{\cm,\sigma}(n,m)
  \sim
  \frac{n^{2m}}{2^m m!}
  \frac{(\sigma q)^{n-m}}{\chi(1)^{\sigma/2}}
  n^{(\sigma-1)/6} 
  \sigma^k e^{(\sigma)}_k
  \sqrt{2 \pi} A\left( 3k + \frac{\sigma}{2}, \mu \right).
\]
\end{theorem}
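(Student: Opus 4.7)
I would start from the exact expression~\eqref{eq:gk} and substitute the closed forms of Lemma~\ref{th:TUVPath}:
\[
  e^{\sigma V(z)} = (1-T(z))^{-\sigma/2}\,\chi(T(z))^{-\sigma/2},
  \qquad U(z) = q\bigl(T(z) - \tfrac{1}{2} T(z)^2\bigr).
\]
The integrand is analytic in the open disk $|z| < 1/e$ except at the branch point $z = 1/e$ where $T(z) \to 1$, together with possible secondary singularities inherited from the paths-of-trees factors that build up $K_{k,\sigma}(z)$.

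First, I would reduce $K_{k,\sigma}(z)$ to its dominant singular part at $T = 1$. Since $\kernels_k$ is finite, $K_{k,\sigma}(z)$ is a rational function of $T(z)$ whose pole structure comes from the factors $(1-T)^{-1}$ and $(1-\Delta_{\ell,\ell}\,T)^{-1}$ ($\ell \geq 2$) appearing in $P_{i,j}(z)$. After the normalization performed in the paper ($\delta = 1$, irreducible case), Lemma~\ref{th:cm} ensures $|\Delta_{\ell,\ell}| < 1$ for $\ell \geq 2$, so the corresponding singularities in $z$ lie strictly outside the disk $|z| \leq 1/e$ and contribute exponentially negligible terms. Among the poles at $T=1$, the one of highest order is $(1-T)^{-3k}$, achieved by the cubic kernels and given explicitly by~\eqref{eq:Kcubic}. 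Likewise $\chi(T(z))^{-\sigma/2}$ tends to the constant $\chi(1)^{-\sigma/2}$ uniformly on a suitable saddle-point contour.

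Next, the problem reduces to the coefficient extraction
\[
  \frac{\chi(1)^{-\sigma/2}\,|\cubics_{k,\sigma}|}{(2k)!\,q^k}\,
  n!\,[z^n]\,\frac{(\sigma U(z))^{n-m+k}}{(n-m+k)!}\,\frac{T(z)^{2k}}{(1-T(z))^{3k+\sigma/2}},
\]
which is the archetypal shape treated in~\cite{JKLP93} and by Theorem~VIII.8 of~\cite{FS09}. In the critical window $m = \tfrac{n}{2}(1+\mu n^{-1/3})$, a saddle-point analysis around $z = 1/e$ using the local expansion $1 - T(z) \sim \sqrt{2(1-ez)}$ rescales the integral into an Airy-type transform, producing the profile $\sqrt{2\pi}\,A(3k+\sigma/2,\mu)$. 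Stirling's formula for $n!/(n-m+k)!$, combined with $U(1/e) = q/2$ and the standard singular analysis of powers of $T$, regenerates the prefactor $n^{2m}/(2^m m!)\cdot(\sigma q)^{n-m}$, while the extra exponent $\sigma/2$ at $(1-T)$ yields the correction $n^{(\sigma-1)/6}$. Finally, the combinatorial constant $|\cubics_{k,\sigma}|/((2k)!\,q^k)$ reorganises into $\sigma^k e_k^{(\sigma)}$ by unrolling the definition of $e_k^{(\sigma)}$ from Theorem~\ref{th:main_vt}; the $q^k$ cancels against the powers of $q$ already present in $U(1/e)^k$ inside $(\sigma U)^{n-m+k}$.

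The hard part will be the uniform control of the errors across the full window $|\mu| \leq n^{1/12}$. Two independent estimates have to be made: first, the subdominant poles of $K_{k,\sigma}(z)$ at $T = 1$, of orders strictly below $3k$, must be shown to produce at most $A(y,\mu)$ with $y < 3k + \sigma/2$, hence lower-order in $n$; second, the replacement of $\chi(T(z))$ by $\chi(1)$ introduces a remainder $O(1-T(z))$ on the saddle-point contour, which I would absorb by shifting the exponent $3k + \sigma/2$ by one and invoking the same $A$-comparison. Both fit into the contour-integration framework of Janson--Knuth--\L{}uczak--Pittel once the colouring weights from $\cm$ have been folded into the explicit constants through Lemma~\ref{th:TUVPath}.
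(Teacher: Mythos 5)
Your plan follows essentially the same route as the paper: reduce $K_{k,\sigma}(z)$ to its dominant cubic-kernel term~\eqref{eq:Kcubic}, then evaluate the resulting coefficient extraction $[z^n]\,U(z)^{n-m+k}(1-T(z))^{-3k-\sigma/2}$ via the coalescing-saddle-point analysis of Lemma~3 of~\cite{JKLP93}, which yields the $A(3k+\sigma/2,\mu)$ profile and the $n^{(\sigma-1)/6}$ correction. The only caveat is that Theorem~VIII.8 of~\cite{FS09} (the large-powers scheme) does \emph{not} apply inside the critical window precisely because the two saddle points coalesce there — this is why the paper switches to the JKLP93 lemma — but since you carry out the Airy-type analysis anyway, this is a mis-citation rather than a gap.
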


\begin{proof}
In Equation~\eqref{eq:subcritical}, there are two saddle-points
that are distinct when $m/n < 1/2$, but coalesce at this critical value.
In this context, the large powers scheme ceases to apply,
so we replace it with~\cite[Lemma $3$]{JKLP93} to obtain Equation~\eqref{eq:gk}
(see also~\cite[Theorem~11]{BFSS01} 
and~\cite[Theorem~IX$.16$]{FS09} for links 
with the \emph{stable laws} of probability theory).
This lemma computes asymptotics of the shape
\[
  [z^n] \frac{U(z)^{n-m}}{(1-T(z))^y}
\]
where~$y$ is a real constant.
In particular, it proves that for any real fixed real values $y_1$ and $y_2$,
\[
  [z^n] \frac{U(z)^{n-m}}{(1-T(z))^{y_1}}  
  \sim n^{(y_1 - y_2)/3} [z^n] \frac{U(z)^{n-m}}{(1-T(z))^{y_2}}.
\]
Therefore, in Equation~\eqref{eq:gk},
the only term of~$K_{k,\sigma}(z)$ that influence the asymptotic
is given by~\eqref{eq:Kcubic}. We then apply Lemma~3 of~\cite{JKLP93} to
\[
  g^{(k)}_{\cm,\sigma}(n,m)
  \sim
  \frac{(n-m)!}{(n-m+k)!}
  \frac{(\sigma q)^{n-m}}{\chi(1)^{\sigma/2}}
  \frac{\sigma^k |\cubics_{k,\sigma}|}{2^k (2k)!}
  \frac{n!}{(n-m)!}
  [z^n]
  \frac{\left( T(z) - \frac{T(z)^2}{2} \right)^{n-m}}{(1-T(z))^{3k+\sigma/2}}.
\]
\end{proof}
Theorem~\ref{th:main_vt} is then established by summation of the~$g^{(k)}_{\cm,\sigma}(n,m)$.
The result is multiplied by~$\delta^m$,
$\sigma$ is replaced by~$c \sigma$, $q$ by~$q/c$ and $\chi(X)$ is adjusted.
More information on the analytic behavior of~$A(y,\mu)$
can be found in~\cite[Lemma~3]{JKLP93}.

    \section{Conclusion}

We have presented a model of random multigraphs 
with colored vertices and weighted edges,
similar to the inhomogeneous graph model~\cite{S02}.
Using tools developed in~\cite{JKLP93}, we derived 
a complete picture of the finite size scaling 
and the critical exponents associated 
to the birth of complex components.
Applications to bipartite graphs and to the satisfiability of quantified~$2$-Xor-formulas 
raised new proof of known results~\cite{PY10} and new results.

In this paper, the emphasis is on the link between
the birth of complex components in~$(\cm,\sigma)$-multigraphs
and the phase transition of tractable satisfiability problems.
This justifies the restriction to vertex-transitive matrices~$\cm$,
often encountered in applications, and the addition
of the factor~$\sigma$ to the original inhomogeneous graph model
in order to enrich the expressiveness.

The present results can be extended to simple $\cms$-graphs.
Indeed, almost surely, all loops and multiple edges
of the random $\cms$-multigraphs considered belong to unicyclic components.
So the only adjustment needed is to replace the generating function~$V(z)$
with~$V(z) - \frac{1}{2} \sum_i \cm_{i,i} T_i(z) - \frac{1}{4} \sum_{i,j} \cm_{i,j}^2 T_i(z) T_j(z)$.
Due to the lack of space, the proof of this result is not included.

We now plan to extend our result to non-vertex-transitive matrices~$\cm$,
and to enumerate $\cm$-multigraphs with a larger density of edges.

\bibliographystyle{amsplain}
%\bibliography{biblio}
\bibliography{/home/elie/research/articles/bibliography/biblio}

\providecommand{\bysame}{\leavevmode\hbox to3em{\hrulefill}\thinspace}
\providecommand{\MR}{\relax\ifhmode\unskip\space\fi MR }
% \MRhref is called by the amsart/book/proc definition of \MR.
\providecommand{\MRhref}[2]{%
  \href{http://www.ams.org/mathscinet-getitem?mr=#1}{#2}
}
\providecommand{\href}[2]{#2}
\begin{thebibliography}{10}

\bibitem{OM06}
Dimitris Achlioptas and Cris Moore, \emph{Random $k$-{SAT}: Two moments suffice
  to cross a sharp threshold}, SIAM Journal of Computing \textbf{36} (2006),
  740--762.

\bibitem{BFSS01}
Cyril Banderier, Philippe Flajolet, Gilles Schaeffer, and Mich{\`e}le Soria,
  \emph{Random maps, coalescing saddles, singularity analysis, and {A}iry
  phenomena}, Random Structures Algorithms \textbf{19} (2001), no.~3-4,
  194--246.

\bibitem{B80}
B\'ela Bollob\'as, \emph{A probabilistic proof of an asymptotic formula for the
  number of labelled regular graphs}, European Journal of Combinatorics
  \textbf{1} (1980), 311--316.

\bibitem{Bollobas}
\bysame, \emph{Random graphs}, Cambridge Studies in Advanced Mathematics, 1985.

\bibitem{BF85}
B\'ela Bollob\'as and Alan~M. Frieze, \emph{On matchings and hamiltonian cycles
  in random graphs}, Random Graphs '83. Based on lectures presented at the 1st
  Pozna\'n Seminar on Random Graphs (Micha\l{} Karo\'nski and Andrzej
  Ruci\'nski, eds.), North-Holland Mathematics Studies, vol. 118,
  North-Holland, 1985, pp.~23 -- 46.

\bibitem{BJR07}
B{\'e}la Bollob{\'a}s, Svante Janson, and Oliver Riordan, \emph{The phase
  transition in inhomogeneous random graphs}, Random Structures and Algorithms
  \textbf{31} (2007), no.~1, 3--122.

\bibitem{CDE07}
Nadia Creignou, Herv\'e Daud\'e, and Uwe Egly, \emph{Phase transition for
  random quantified {XOR}-formulas}, Journal . Artif. Intell. Res. \textbf{29}
  (2007), 1--18.

\bibitem{DR11}
Herv\'e Daud\'e and Vlady Ravelomanana, \emph{Random 2-\textsc{XORSAT} phase
  transition}, Algorithmica \textbf{59(1)} (2011), 48--65.

\bibitem{DMSZ}
Olivier Dubois, R.~Monasson, B.~Selman, and R.~Zecchina, \emph{Phase transition
  in combinatorial problems}, vol. 265(1-2), Elsevier, 2001.

\bibitem{ER60}
Paul Erd\H{o}s and Alfr\'ed R\'enyi, \emph{On the evolution of random graphs},
  publication of the mathematical institute of the hungarian academy of
  sciences \textbf{5} (1960), 17.

\bibitem{FKP89}
Philippe Flajolet, Donald~E. Knuth, and Boris Pittel, \emph{The first cycles in
  an evolving graph}, Discrete Mathematics \textbf{75} (1989), no.~1-3,
  167--215.

\bibitem{FS09}
Philippe Flajolet and Robert Sedgewick, \emph{Analytic combinatorics},
  Cambridge University Press, 2009.

\bibitem{F99}
Ehud Friedgut, \emph{Sharp thresholds of graph properties, and the $k$-sat
  problem}, Journal of the A.M.S. \textbf{12(4)} (1999), 1017--1054.

\bibitem{GR01}
Chris Godsil and Gordon Royle, \emph{Algebraic graph theory, graduate texts in
  mathematics}, New-York : Springer-Verlag, 2001.

\bibitem{JKLP93}
Svante Janson, Donald~E. Knuth, Tomasz \L{}uczak, and Boris Pittel, \emph{The
  birth of the giant component}, Random Structures and Algorithms \textbf{4}
  (1993), no.~3, 233--358.

\bibitem{PY10}
Boris Pittel and Ji-A Yeum, \emph{How frequently is a system of 2-linear
  boolean equations solvable?}, Electronic Journal of Combinatorics \textbf{17}
  (2010).

\bibitem{PW13}
Mark C.~Wilson Robin~Pemantle, \emph{Analytic combinatorics in several
  variables}, Cambridge University Press, New York, NY, USA, 2013.

\bibitem{S02}
Bo~S\"oderberg, \emph{General formalism for inhomogeneous random graphs}, Phys.
  Rev. E \textbf{66} (2002), 066--121.

\bibitem{W72}
Edward~M. Wright, \emph{Counting coloured graphs~{III}}, Canadian Journal of
  Mathematics \textbf{14} (1972), no.~1, 82--89.

\bibitem{W77}
\bysame, \emph{The number of connected sparsely edged graphs}, Journal of Graph
  Theory \textbf{1} (1977), 317--330.

\end{thebibliography}
\end{document}